\title[Packing spectra for self-affine measures]{Packing Spectra for Bernoulli measures supported on Bedford-McMullen carpets}
\author{Thomas Jordan}
\address{Thomas Jordan, School of Mathematics, University of Bristol, University Walk, Bristol, BS8 1TW, UK}
\email{thomas.jordan@bris.ac.uk}
\author{Micha\l \ Rams} 
\address{Micha\l \ Rams, Institute of Mathematics, Polish Academy of Sciences\ ul. \'Sniadeckich 8, 00-956 Warszawa, Poland}
\email{rams@impan.pl}
\thanks{M.R. was supported by the MNiSW grant N201 607640 (Poland)}
\theoremstyle{plain}
\newtheorem{lem}{Lemma}[section]
\newtheorem{prop}[lem]{Proposition}
\newtheorem{thm}[lem]{Theorem}
\newtheorem{cor}[lem]{Corollary}
\theoremstyle{definition}
\newtheorem{defn}[lem]{Definition}
\theoremstyle{remark}
\numberwithin{equation}{section}
\newcommand{\R}{\mathbb R}
\newcommand{\N}{\mathbb N}
\newcommand{\dimp}{\dim_P}
\renewcommand{\epsilon}{\varepsilon}
\begin{document}

\begin{abstract}
In this paper we consider the packing spectra for local dimension of Bernoulli measures supported on Bedford-McMullen carpets. We show that typically the packing dimension of the regular set is smaller than the packing dimension of the attractor. We also consider a specific class of measures for which we are able to calculate the packing spectrum exactly and we show that the packing spectrum is discontinuous as a function on the space of Bernoulli measures. 
\end{abstract}
\maketitle
\section{Introduction}
The aim of this paper is to develop the theory of multifractal analysis for a special class of self-affine measures. These measures are the Bernoulli measures supported on Bedford-McMullen carpets, \cite{M}, \cite{B} and their multifractal properties have been studied in several papers: \cite{K}, \cite{O}, \cite{N}, \cite{BM}, \cite{GL}, \cite{JR}, \cite{R} and \cite{BF}. However most of these papers focus on the Hausdorff spectra and very little is known about the packing spectra. For self-similar measures satisfying the open set condition the packing and Hausdorff spectra are the same \cite{CM,AP} but are in general different for subsets of the irregular set where the liminf and limsup are specified, \cite{BOS}. For Bernoulli measures on Bedford-McMullen carpets in \cite{O} an upper bound is given in terms of the Legendre transform of a certain function, this is typically greater than the Hausdorff spectra. However in \cite{R} it is shown that this upper bound may not be sharp and that there are cases when the Hausdorff spectra and packing spectra are the same even when the Hausdorff and packing dimensions of the attractor are different. Moreover Reeve, \cite{R} calculated the packing spectra for a specific class of self-affine measures.

We extend this theory in two ways, firstly by showing that for a generic class of self-affine measures the packing dimension of the attractor is strictly greater than the maximum of the packing spectra, i.e. the packing dimension of the regular set. Secondly we consider a specific family of Bedford-McMullen carpets and Bernoulli measures supported on these carpets. We explicitly calculate the packing spectra for this family and show that it is not a continuous function of the parameters for the measures. This is in contrast to the case for the Hausdorff and packing spectra for self-similar measures, \cite{CM}, \cite{AP} and for the Hausdorff spectra of self-affine measures on Bedford-McMullen carpets, \cite{BM}, \cite{K}, \cite{O} and \cite{JR}.

The setting we consider is that for a given  Bernoulli measure $\mu$ supported on a Bedford-McMullen carpet, we look at the
set of points where the local dimension of $\mu$ equals $\alpha$
(to be denoted by $X_\alpha$) and the set of points where the
symbolic local dimension of $\mu$ (i.e. the local dimension
obtained by using approximate squares instead of geometric balls)
equals $\alpha$, to be denoted by $X_\alpha^{\rm symb}$. Then the
Hausdorff dimensions of $X_\alpha$ and  $X_\alpha^{\rm symb}$
coincide, they also coincide with the Hausdorff dimension of
certain Bernoulli measure $\mu_\alpha$, for which a typical point
belongs to $X_\alpha^{\rm symb}$. The function $\dim_H X_\alpha$
is concave, it is a Legendre transform of some well-defined
multifractal function. The maximum value achieved by $\dim_H
X_\alpha$ equals the Hausdorff dimension of the whole
Bedford-McMullen carpet. Moreover, $\dim_H X_\alpha$ is real
analytic both as a function of $\alpha$ and as a function of
$\mu$.

Seeing this, Olsen in \cite{O} conjectured that (most of) the same
properties hold for the packing spectrum. In particular, he
conjectured that $\dim_P X_\alpha^{\rm symb}$ is a Legendre
spectrum of another multifractal function, and then wrote down
some properties of such a function. However in \cite{R} it was
shown that for a certain family of examples the packing spectra is
the same as the Hausdorff spectra even when the packing dimension and Hausdorff dimension 
of the attractors is different.  In particular this shows that the
conjecture in \cite{O} cannot hold in general. This is related to
work by Nielsen, \cite{N} where the dimension of sets determined by the
frequencies of occurrence of each map in the iterated function
system. Again in this case Nielsen shows that the packing and
Hausdorff spectra are the same even when the Hausdorff and
packing dimension of the attractor are different.

In this note we go on to show that under a condition which holds
for typical carpets the conjecture in \cite{O} cannot hold and
then by considering a specific class of examples that the
situation is much more complicated than the situation with the
Hausdorff spectra. We prove two theorems. The first one, Theorem
\ref{thm:1}, states that it is unlikely that the maximum of
(symbolic) packing spectrum equals the packing dimension of the
Bedford-McMullen carpet. More precisely, there is a codimension
one condition necessary and codimension two condition sufficient
for this to happen. Both conditions are on coefficients of the
Bernoulli measure $\mu$.

Our Theorem \ref{thm:2} is more interesting. We consider a very
special family of Bernoulli measures, but the same phenomenon
holds in much greater generality. Assume the Bedford-McMullen
carpet has exactly two rows. We also assume that the Bernoulli measure $\mu$
is equally distributed in each row. For a given carpet such measures form a one
parameter family, they are uniquely determined by the measure of
the first row. For such measures we have $\dim_P X_\alpha=\dim_P
X_\alpha^{\rm symb}$ for all $\alpha$. Then, for all parameter
values except one, $\dim_H X_\alpha = \dim_P X_\alpha$ for all
$\alpha$. However, there is one exceptional measure $\mu$ in our
family for which $\dim_H X_\alpha < \dim_P X_\alpha$ for all
$\alpha$ in the interior of the spectrum interval. The function
$\dim_P X_\alpha$ is still well behaved as a function of
$\alpha$, but as a function of $\mu$ it is not even continuous.

Note here that this phenomenon is not an artifact created by the
fact that the packing dimension is in some way not adequate to
study the local dimension spectrum. On the contrary, while for all non-exceptional
measures $\mu$ the set $X_\alpha$ is equal to the set of
$\mu_\alpha$-typical points, for the exceptional measure it is
strictly greater. The Hausdorff dimension of those additional
points is equal to $\dim_H \mu_\alpha$ (which is why this set does not cause the Hausdorff
dimension to grow), but the packing dimension is
strictly greater than $\dim_H X_\alpha$ for all $\alpha$ in the
interior of the spectrum.

We are not able to present any conjectures, as to what the packing
spectrum really is in general. But it certainly seems to be an
object worthy of further study.

\section{Notation and results}

We start this section by defining packing dimension and stating some basic results we use to calculate the packing dimension of sets. For a set $A\subseteq\R^d$ and $s\geq 0$ let 
$$\tilde{\mathcal{P}}(A)=\lim_{\epsilon\to 0}\sup\left\{\sum_{i} B(x_i,r_i): x_i\in A,r_i<\epsilon\text{ and }|x_i-x_j|>r_i+r_j\text{ for }i\neq j\right\}$$
and define the outer packing measure by
$$\mathcal{P}(A)=\inf\left\{\sum \tilde{\mathcal{P}}(A_i):\cup_{i}A_i\supseteq A\right\}.$$
$\mathcal{P}$ gives a measure when restricted to measurable sets and we can define the packing dimension analogously to Hausdorff dimension by
$$\dim_P(A)=\inf\{s:\mathcal{P}(A)=0\}=\sup\{s:\mathcal{P}(A)=\infty\}.$$
While the main focus of this paper is on packing dimension we will at time use Hausdorff dimension, denoted by $\dim_H$, and upper box counting dimension denoted $\overline{\dim}_B$, for the definitions of these dimension we refer the reader to .
We will use the following standard results on packing dimension throughout the paper: the first result relates packing dimension to the Hausdorff dimension and the upper box dimension.
\begin{lem}
For any $A\subseteq\R^d$ we have that
$$\dim_H(A)\leq\dim_P(A)\leq\overline{\dim}_B(A).$$
\end{lem}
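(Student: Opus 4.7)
I would prove the two inequalities separately, each by a well-known argument.

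For the upper bound $\dim_P(A)\le\overline{\dim}_B(A)$, fix $s>t>\overline{\dim}_B(A)$. By definition of the upper box dimension, for all sufficiently small $\epsilon$ the maximal number of pairwise-disjoint balls of radius $\epsilon$ centred in $A$ is at most $\epsilon^{-t}$. Given any packing $\{B(x_i,r_i)\}$ with $r_i<\epsilon$, I would group the balls by dyadic scale $r_i\in[2^{-k-1},2^{-k})$, bound the number of balls in each scale $k$ by $C\cdot 2^{(k+1)t}$, and estimate the contribution to $\sum(2r_i)^{s}$ by $C'\cdot 2^{-k(s-t)}$. Summing the geometric series over $k\ge\log_2(1/\epsilon)$ produces a bound tending to zero as $\epsilon\to 0$, so $\tilde{\mathcal{P}}^{s}(A)=0$, whence $\mathcal{P}^{s}(A)=0$ and $\dim_P(A)\le s$.

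For the lower bound $\dim_H(A)\le\dim_P(A)$, I would use a $5r$-covering lemma to convert a packing into a Hausdorff cover. For any $\delta>0$, applied to the family $\{B(x,r):x\in A,\ r<\delta/10\}$, the lemma yields a disjoint subcollection $\{B(x_i,r_i)\}$ whose $5$-fold enlargements $B(x_i,5r_i)$ still cover $A$. The disjoint balls are themselves a legitimate packing of $A$ with $r_i<\delta/10$, while the enlargements form a cover of $A$ by sets of diameter at most $\delta$, giving
$$\mathcal{H}^{s}_{\delta}(A)\le\sum(10r_i)^{s}=5^{s}\sum(2r_i)^{s}\le 5^{s}\sup\Bigl\{\textstyle\sum(2r_j)^{s}:\text{packing of }A,\ r_j<\delta/10\Bigr\}.$$
Letting $\delta\to 0$ and using that $\mathcal{H}^{s}_{\delta}$ increases to $\mathcal{H}^{s}$ while the packing supremum decreases to $\tilde{\mathcal{P}}^{s}$, I obtain $\mathcal{H}^{s}(E)\le 5^{s}\tilde{\mathcal{P}}^{s}(E)$ for every bounded set $E$.

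The main (mild) obstacle is to promote this to a bound involving the genuine packing measure $\mathcal{P}^{s}$, not the pre-measure $\tilde{\mathcal{P}}^{s}$. This is routine via countable subadditivity of $\mathcal{H}^{s}$: for any countable cover $A\subseteq\bigcup_i A_i$,
$$\mathcal{H}^{s}(A)\le\sum_i\mathcal{H}^{s}(A_i)\le 5^{s}\sum_i\tilde{\mathcal{P}}^{s}(A_i),$$
and taking the infimum over such decompositions yields $\mathcal{H}^{s}(A)\le 5^{s}\mathcal{P}^{s}(A)$. Hence $s>\dim_P(A)$ forces $\mathcal{P}^{s}(A)=0$, so $\mathcal{H}^{s}(A)=0$ and $\dim_H(A)\le s$. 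The statement is classical; beyond carefully distinguishing $\tilde{\mathcal{P}}$ from $\mathcal{P}$ at the last step, no further ideas are required.
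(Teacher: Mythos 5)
Your argument is correct. There is, however, nothing in the paper to compare it against: the paper does not prove this lemma at all, it simply cites Mattila \cite{Mat} (page 82), so your self-contained proof is by definition a different (more explicit) route. What you write is essentially the standard textbook argument and it holds up: the dyadic-scale grouping gives $\tilde{\mathcal{P}}^s(A)=0$ for $s>\overline{\dim}_B(A)$, hence $\mathcal{P}^s(A)=0$ via the trivial cover; and the Vitali $5r$-lemma converts a packing into a $\delta$-cover, giving $\mathcal{H}^s(A)\le 5^s\tilde{\mathcal{P}}^s(A)$ for every set, which you correctly upgrade to $\mathcal{H}^s(A)\le 5^s\mathcal{P}^s(A)$ by countable subadditivity of $\mathcal{H}^s$ over the covers appearing in the definition of $\mathcal{P}^s$ --- the step most often botched, and the constant $5^s$ is harmless for dimensions. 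Two cosmetic remarks: the paper's packing condition is the strict inequality $|x_i-x_j|>r_i+r_j$, whereas the covering lemma yields disjoint balls, so one should either use disjoint closed balls or shrink the radii slightly to get a legitimate packing; and instead of the $5r$-lemma one can take a maximal collection of disjoint balls of a fixed radius $\epsilon$ centred in $A$ (doubling the radii then covers $A$), which gives the same conclusion with constant $2^s$ and is the argument closer to the one in Mattila, where in fact $\mathcal{H}^s\le\mathcal{P}^s$ is obtained with constant $1$. Neither point affects the validity of your proof.
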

\begin{proof}
See   \cite{Mat}[page 82]. 
\end{proof}
The second is a version of Frostman's lemma for packing dimension.
\begin{lem}
For a Borel set $A\subseteq\R^d$ if there exists a Borel probability measure $\mu$ such that $\mu(A)=1$ and
$$\limsup_{r\to 0}\frac{\log\mu(B(x,r))}{\log r}\geq s$$
for all $x\in A$ then $\dim_P A\geq s$.
\end{lem}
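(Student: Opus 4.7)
My plan is to fix an arbitrary $t<s$ and show $\dim_P A \geq t$; sending $t\nearrow s$ then yields the lemma. By the definition of the outer packing measure, it suffices to verify that for every countable Borel cover $A\subseteq \bigcup_i A_i$ the sum $\sum_i \tilde{\mathcal{P}}(A_i)$ (at exponent $t$) is bounded below by a positive constant depending only on $d$ and $t$. Because $\sum_i \mu(A_i\cap A)\geq \mu(A)=1$, this reduces to establishing a per-piece lower bound $\tilde{\mathcal{P}}(A_i)\geq C\,\mu(A_i\cap A)$ with $C=C(d,t)>0$.

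To prove this per-piece bound, I would fix $A_i$ with $\mu(A_i\cap A)>0$ and a scale $\delta>0$. Choosing $\epsilon>0$ with $s-\epsilon\geq t$, the $\limsup$ hypothesis lets me assign to each $x\in A_i\cap A$ a radius $r(x)\in(0,\delta)$ satisfying $\mu(B(x,r(x)))\leq r(x)^{s-\epsilon}\leq r(x)^t$ (using $r<1$). Applying Besicovitch's covering theorem to $\{B(x,r(x))\}_{x\in A_i\cap A}$ extracts a countable subcollection that decomposes into $K_d$ pairwise disjoint subfamilies still covering $A_i\cap A$, where $K_d$ depends only on $d$. Pigeonholing over these $K_d$ families produces a disjoint subfamily $\{B(x_j,r_j)\}$ with
$$\sum_j r_j^t \;\geq\; \sum_j \mu(B(x_j,r_j)) \;\geq\; \frac{\mu(A_i\cap A)}{K_d}.$$
This subfamily is a legitimate $\delta$-packing in the sense used in the definition above (centres lie in $A_i$, radii are $<\delta$, and disjointness of the balls yields the separation $|x_j-x_k|>r_j+r_k$), so $\tilde{\mathcal{P}}(A_i)\geq 2^t\mu(A_i\cap A)/K_d$ after letting $\delta\to 0$, completing the reduction.

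The main obstacle, in my view, is that the $\limsup$ hypothesis only furnishes suitable radii pointwise, with no uniformity in $x$, so one cannot immediately construct a single disjoint packing on which $\mu(B_j)\leq r_j^t$ holds simultaneously for every selected ball. Besicovitch's covering theorem is the clean tool for this: the inequality is preserved on every selected ball while only a dimensional constant is lost on passage to a disjoint subfamily. By contrast, a first attempt via Vitali's $5r$-lemma would require a bound on $\mu$ of the enlarged balls $B(x_j,5r_j)$, which the hypothesis does not supply, so the Besicovitch route appears to be the right one.
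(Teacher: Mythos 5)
Your argument is correct. Note that the paper does not prove this lemma at all --- it simply cites Mattila, Theorem 6.11 --- so what you have written is essentially a self-contained version of the standard proof behind that citation: fix $t<s$, use the $\limsup$ hypothesis to attach to each $x\in A_i\cap A$ an arbitrarily small radius with $\mu(B(x,r(x)))\leq r(x)^t$, extract a disjoint subfamily by a Besicovitch-type covering argument, and conclude $\tilde{\mathcal{P}}(A_i)\gtrsim \mu(A_i\cap A)$ at exponent $t$, whence $\mathcal{P}(A)>0$ and $\dim_P A\geq t$. The route in Mattila goes through upper densities and the Vitali covering theorem for Radon measures (which yields a disjoint subfamily covering $\mu$-almost all of $A_i\cap A$, with no loss of constant), whereas you use the plain Besicovitch covering theorem plus a pigeonhole over the $K_d$ disjoint subfamilies; the factor $1/K_d$ you lose is of course irrelevant for dimension, and your observation that the $5r$-covering lemma would not suffice is exactly right. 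Two cosmetic points: the covers $\{A_i\}$ in the definition of $\mathcal{P}$ need not be Borel, but your argument only uses monotonicity and countable subadditivity, so it runs verbatim with the outer measure $\mu^*$ in place of $\mu$; and you should take the selected balls closed, since disjointness of closed balls is precisely the strict separation $|x_i-x_j|>r_i+r_j$ demanded in the paper's definition of a packing.
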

\begin{proof}
See \cite{Mat}[Theorem 6.11, page 97].
\end{proof}

 We now formally introduce Bedford-McMullen carpets. Let $m,n\in\N$ with $2\leq m<n$ and 
$$D\subseteq \{0,\ldots,m-1\}\times\{0,\ldots,n-1\}$$
where $|D|\geq 2$ and we define $\sigma:=\frac{\log m}{\log n}.$ 
For $(i,j)\in D$ let $T_{i,j}:\R^2\to\R^2$ be defined by 
$$T_{i,j}((x,y))=\left(\frac{x+i}{n},\frac{y+j}{m}\right)$$
and $\Lambda$ be the unique non-empty compact set satisfying
$$\Lambda=\cup_{(i,j)\in D}T_{{i,j}}(\Lambda).$$
Sets of the form $\Lambda$ were first studied in \cite{B} and \cite{M} and are usually known as Bedford-McMullen carpets. We will let $L_0=|D|$, and for $0\leq i\leq m-1$ we let
$$n_i=|\{(i,j)\in D:0\leq j\leq n-1\}|.$$
Finally let $L_1=|\{i:n_i\neq 0\}|$.

The geometry of affine sets is different than the geometry of
conformal fractals because the cylinders are not approximate balls
anymore. Hence, in order to work with the geometric properties of
the fractal, we need to use not single cylinders but some unions
of cylinders. Given two positive integers $n_1\leq n_2$ and a
point $x\in\Lambda$ we define the {\bf rectangle}
\[
R^{n_1, n_2}(x) = \{y\in \Lambda; i_k(y)=i_k(x) \ \forall k\leq
n_2 \rm{ and } j_k(y)=j_k(x) \ \forall k\leq n_1\}.
\]
Note that $R^{N,N}(x)$ is just the $N$-th level cylinder
containing $x$. The rectangle $R^{n_1, n_2}(x)$ is an intersection
of $\Lambda$ with a geometrical rectangle with horizontal side
$n^{-n_1}$ and vertical side $m^{-n_2}$. The $N$-th level {\bf
approximate square} is defined as $C_N(x) = R^{\lceil\sigma
N\rceil, N}(x)$. It is a set of diameter approximately $m^{-N}$. It will sometimes be convenient to define $C_N(x)$ for non-integer values of $N>0$, we will denote $C_{N}(x)=C_{\lceil N\rceil}(x)$.

We now introduce the class of measures we will be considering which are projections of Bernoulli measures to Bedford-McMullen carpets. We let $\{p_{i,j}\}_{(i,j)\in D}$ be a probability vector and $\mu$ be the unique Borel probability measure for which for all Borel sets $B\subseteq\R^2$ we have
$$\mu(B)=\sum_{(i,j)\in D} p_{i,j}\mu(T_{i,j}^{-1}(B)).$$
An alternative way of defining this measure is as the natural projection of $\{p_{i,j}\}$ Bernoulli measure from the shift space $D^{\N}$ to $\Lambda$.

In this paper we deal with local dimensions.
The {\bf real local dimension} (or simply local dimension; we use
the name real local dimension to distinguish it from the symbolic
local dimension) of a Borel measure $\mu$ supported in $\R^2$ at a point $x$ is defined as
\[
d_\mu(x) = \lim_{r\to 0} \frac {\log \mu(B_r(x))} {\log r},
\]
provided the limit exists. 
However it is often easier to work with approximate squares rather then geometric balls.
Thus we define the {\bf symbolic local dimension} as
\[
\delta_\mu(x) = \lim_{N\to\infty} \frac {\log \mu(C_N(x))} {-N\log
m},
\]
provided the limit exists. For any $\alpha\in\R$ we will denote
$$X_{\alpha}=\{x\in\Lambda:d_\mu(x)=\alpha\}\text{ and }X_{\alpha}^{\rm{symb}}\{x\in\Lambda:\delta_\mu(x)=\alpha\}.$$
In several cases it is possible to show that $\delta_\mu(x)=d_{\mu}(x)$ however this is not always true. We will let
$$\alpha_{m}=\min_{(i,j)\in D}\frac{ -\sigma\log p_{ij}+(\sigma-1)\log q_i}{\log m}$$
and
$$\alpha_{M}=\max_{(i,j)\in D}\frac{ -\sigma\log p_{ij}+(\sigma-1)\log q_i}{\log m}.$$
Note that $\alpha\notin [\alpha_m,\alpha_M]$ is equivalent to $X_{\alpha}=X_{\alpha}^{\rm{symb}}=\emptyset$.

Clearly, the relation between symbolic and real local dimension at
any given point is given by the geometric interplay between balls
and approximate squares. On the one hand, we have for all $x\in
\Lambda$:
\begin{equation} \label{above}
C_N(x) \subseteq B_{m^{-N}}(x).
\end{equation}

On the other hand, the point $x$ can be very close to the boundary
of an approximate square it lies in, and then the ball
$B_{cm^{-N}}(x)$ will only be contained in $C_N(x)$ for very small
$c$. We can describe the distance from $x$ to the boundary of
$C_N(x)$ using the symbolic expansion of $x$. Denote

\[
I_N(x) = \frac 1 N \sup\{k\in \N; i_{N+1}(x)=\ldots=i_{N+k}(x)\in
\{0, m-1\}\}
\]
and

\[
J_N(x) = \frac 1 {\lceil \sigma N\rceil} \sup\{k\in \N; j_{\lceil
\sigma
  N\rceil+1}(x)=\ldots=j_{\lceil \sigma N\rceil+k}(x)\in\{0, n-1\} \}.
\]

If some positive $\delta$ is greater than $I_N(x)$ and  $J_N(x)$
then the distance from $x$ to the boundary of $C_N(x)$ is at least
$m^{-N(1+\delta)}$, and hence

\begin{equation} \label{below}
B_{m^{-N(1+\delta)}}(x) \subseteq C_N(x).
\end{equation}

An easy consequence of \eqref{above}, \eqref{below} is that if
both $I_N(x)$ and $J_N(x)$ converge to 0 as $N$ goes to infinity
(in particular, almost all points for any Bernoulli measure with
nontrivial horizontal and vertical projections have this property)
then the symbolic and local dimensions of any Bernoulli measure
coincide at $x$.

The symbolic local dimension can thus be used to calculate the
real local dimension at many points. At the same time, for any
Bernoulli measure $\mu$ its symbolic local dimension at a point
$x$ is easy to calculate from symbolic expansion of $x$. Denote

\[
q_i = \sum_j p_{i,j}.
\]

We then have

\begin{equation} \label{mucn}
\mu(C_N(x)) = \prod_{k=1}^{\lceil \sigma N\rceil} p_{i_k(x),
j_k(x)} \cdot \prod_{k=\lceil \sigma N\rceil +1}^N q_{i_k(x)}
\end{equation}

and we can calculate $\delta_\mu(x)$ directly. Note also another
important consequence of \eqref{mucn}: there exist a constant
$K>0$ such that for any $x\in\Lambda$ and $N>0$ we have

\begin{equation} \label{smoothmucn}
K < \frac {\mu(C_{N+1}(x))} {\mu(C_N(x))} \leq 1.
\end{equation}

Our results are as follows.

\begin{thm} \label{thm:1}
The symbolic and real packing local dimension spectra are both strictly smaller
than $\dim_P \Lambda$ for systems not satisfying

\begin{equation} \label{condition}
\sum_i \frac 1 {L_1} \log q_i = \sum_i \frac {n_i} {L_0} \log q_i = A.
\end{equation}

For systems satisfying both \eqref{condition} and

\begin{equation} \label{condition2}
\sum_{(i,j)\in D} \frac {1}{L_0} \log p_{ij} = \sum_{(i,j)\in D} \frac
1 {n_i L_1} \log p_{ij}=B
\end{equation}
both the real and symbolic packing spectra are equal to $\dim_P \Lambda$ at
\[
\alpha_0= -\frac 1 {\log m} (\sigma B + (1-\sigma) A).
\]
\end{thm}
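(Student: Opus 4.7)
The plan is to pin down a Legendre-type upper bound via counting approximate squares, construct a two-phase non-stationary Bernoulli measure for the sufficiency direction, and reverse-engineer the structure of any packing witness for the necessity direction.

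For the upper bound, cover $X_\alpha^{\rm{symb}}$ by approximate squares at level $N$. Using \eqref{mucn}, each $C$ with $\mu(C)\in[m^{-(\alpha+\epsilon)N}, m^{-(\alpha-\epsilon)N}]$ corresponds via the method of types to a pair of empirical distributions $(\pi,s)$, with $\pi\in\mathcal{P}(D)$ describing $(i_k,j_k)$ for $k\le\lceil\sigma N\rceil$ and $s\in\mathcal{P}(\{i:n_i>0\})$ describing $i_k$ for $k>\lceil\sigma N\rceil$. This yields
\[
\dim_P X_\alpha^{\rm{symb}}\le G(\alpha):=\frac{1}{\log m}\max\Bigl\{\sigma H(\pi)+(1-\sigma)H(s)\Bigr\},
\]
with the maximum over $\pi,s$ subject to $\sigma\langle\pi,\log p\rangle+(1-\sigma)\langle s,\log q\rangle=-\alpha\log m$, via the modified-upper-box characterisation of packing dimension applied to the cover $\{x:|\log\mu(C_N(x))/(-N\log m)-\alpha|<\epsilon\ \forall N\ge N_0\}$ as $\epsilon\to 0$. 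A Lagrange calculation gives the unconstrained maximum $\sigma\log L_0+(1-\sigma)\log L_1=\dim_P\Lambda\cdot\log m$, attained at $\pi\equiv 1/L_0$, $s\equiv 1/L_1$; substituting into the constraint pins down $\alpha=\alpha_0$, so $G(\alpha_0)=\dim_P\Lambda$ and $G(\alpha)<\dim_P\Lambda$ otherwise.

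For the sufficiency direction assume \eqref{condition} and \eqref{condition2}. Introduce two Bernoulli distributions on $D$: $\pi^A_{ij}=1/L_0$ (uniform on $D$, with $i$-marginal $n_i/L_0$) and $\pi^B_{ij}=1/(n_iL_1)$ (uniform within each non-empty row, with $i$-marginal $1/L_1$). Pick a rapidly growing sequence $N_1<N_2<\cdots$, say $N_{k+1}=N_k^k$, and form the non-stationary product $\nu$ whose marginal at coordinate $\ell$ is $\pi^B$ if $\ell\in(\lceil\sigma N_k\rceil, N_k]$ for some $k$ and $\pi^A$ otherwise. Under \eqref{condition}, both $\sum(n_i/L_0)\log q_i$ and $\sum(1/L_1)\log q_i$ equal $A$, so in \eqref{mucn} the window contribution converges to $(1-\sigma)A$ at every scale irrespective of the local phase mixture; similarly \eqref{condition2} stabilises the prefix contribution to $\sigma B$. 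Hence $\delta_\mu(x)=\alpha_0$ for $\nu$-a.e.\ $x$, i.e.\ $\nu(X_{\alpha_0}^{\rm{symb}})=1$. At the scale $N_k$, fast growth renders earlier phase-$B$ blocks negligible inside the prefix while the window $(\lceil\sigma N_k\rceil, N_k]$ is entirely phase-$B$, so a direct computation gives $\nu(C_{N_k}(x))\asymp L_0^{-\lceil\sigma N_k\rceil}L_1^{-(N_k-\lceil\sigma N_k\rceil)}=m^{-N_k\dim_P\Lambda+o(N_k)}$ for $\nu$-a.e.\ $x$. Applying the packing Frostman lemma from Section 2 yields $\dim_P X_{\alpha_0}^{\rm{symb}}\ge\dim_P\Lambda$.

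For the necessity of \eqref{condition}, assume $\dim_P X_\alpha^{\rm{symb}}=\dim_P\Lambda$; the Legendre bound forces $\alpha=\alpha_0$. Any packing witness $\nu$ satisfies $\nu(X_{\alpha_0}^{\rm{symb}})=1$ and $\limsup\log\nu(C_N(x))/(-N\log m)=\dim_P\Lambda$ for $\nu$-a.e.\ $x$; by Egorov, along a subsequence $N_k$ the entropy of $\nu$ with respect to the approximate-square partition at level $N_k$ exceeds $(\dim_P\Lambda-o(1))N_k\log m$. Subadditivity of entropy together with the per-coordinate upper bounds $\log L_0$ (for $\ell\le\lceil\sigma N_k\rceil$) and $\log L_1$ (for $\ell>\lceil\sigma N_k\rceil$, where only $i_\ell$ is fixed) then forces each single-coordinate marginal of $\nu$ to be within $o(1)$ entropy deficit of uniform on $D$ or on non-empty rows respectively, and a Pinsker-type bound upgrades this to $o(1)$ total variation. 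At an auxiliary scale $N<\sigma N_k$ the window $(\lceil\sigma N\rceil, N]$ sits inside the prefix of scale $N_k$, so the empirical distribution of $i_\ell$ over this window is close to $n_i/L_0$; imposing the local-dimension condition at scales $N$ and $N_k$ for $\nu$-a.e.\ $x$ then yields $\sigma B+(1-\sigma)\sum_i(n_i/L_0)\log q_i=\sigma B+(1-\sigma)A=-\alpha_0\log m$, whence $\sum_i(n_i/L_0)\log q_i=\sum_i(1/L_1)\log q_i$, precisely \eqref{condition}. Transfer to $\dim_P X_\alpha$ uses \eqref{above} and \eqref{below}: a Borel--Cantelli argument shows $I_N(x),J_N(x)\to 0$ on a $\nu$-full-measure set for the witness above, so it also supplies the lower bound for $X_{\alpha_0}$; conversely $X_\alpha\setminus X_\alpha^{\rm{symb}}\subseteq\{I_N\not\to 0\}\cup\{J_N\not\to 0\}$ has packing dimension strictly below $\dim_P\Lambda$ by a separate cover argument, giving the matching upper bound. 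The main technical obstacle is the entropy-to-marginals-to-intermediate-scale chain in the necessity argument: converting the global entropy deficit at scale $N_k$ into uniform quantitative control on each single-coordinate marginal, and then propagating that control faithfully to the empirical average of $\log q_i$ over the window at the auxiliary scale $N$.
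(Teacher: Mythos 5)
Your Legendre-type upper bound and your sufficiency construction are essentially fine: the two-phase non-stationary product measure is a measure-theoretic version of the paper's construction (points whose expansions split into stretches with frequencies $1/L_0$ and stretches with frequencies $1/(n_iL_1)$), and conditions \eqref{condition} and \eqref{condition2} are used for exactly the same purpose, to make the exponent in \eqref{mucn} equal to $\alpha_0$ at \emph{every} scale regardless of the phase mixture; you still owe the routine ball-versus-approximate-square comparison before invoking the Frostman-type lemma of Section 2, and the run-length (Borel--Cantelli) argument to pass from the symbolic to the real spectrum, but these are fixable. The genuine gap is in the necessity direction, precisely at the step you yourself flag as the ``main technical obstacle'', and it is not merely technical: the argument as structured fails. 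From a witness measure $\nu$ with $\limsup_N \log\nu(C_N(x))/(-N\log m)\geq \dim_P\Lambda-o(1)$ for $\nu$-a.e.\ $x$ you cannot extract a \emph{common} subsequence $N_k$ along which the approximate-square partition has entropy $(\dim_P\Lambda-o(1))N_k\log m$: the scales realizing the limsup depend on $x$, Egorov does not synchronise them, and the sets $A_N=\{x:\nu(C_N(x))\leq m^{-N(\dim_P\Lambda-\epsilon)}\}$ can satisfy $\nu(A_N)\to 0$ at every scale while still covering a.e.\ point infinitely often (Fatou only gives $\nu(\limsup A_N)\geq\limsup\nu(A_N)$, not the reverse), so the entropy at each fixed scale may stay far below maximal. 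Moreover, even granting maximal entropy at scale $N_k$, subadditivity only forces the one-coordinate marginals of $\nu$ to be near-uniform \emph{on average} over $[0,\lceil\sigma N_k\rceil]$; a short block $(\lceil\sigma N\rceil,N]$ with $N\ll N_k$ can consist entirely of exceptional coordinates, and in any case the local-dimension constraint is a pathwise statement about empirical frequencies of $x$, which one-coordinate marginals of a non-product witness measure do not control. Finally, ``any packing witness $\nu$ satisfies $\nu(X_{\alpha_0}^{\rm symb})=1$ and $\limsup=\dim_P\Lambda$ a.e.''\ overstates the converse Frostman lemma (one only gets, for each $t<\dim_P$, a measure with upper local dimension at least $t$ on a set of positive measure).

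The paper avoids all of this by staying pointwise and combinatorial. If \eqref{condition} fails, it compares the same point at the two consecutive scales $\lceil M\sigma^{-1}\rceil$ and $\lceil M\sigma^{-2}\rceil$: by \eqref{mucn} the ``generic'' values of the two exponents (frequencies $1/L_0$ on the $(i,j)$'s up to $\lceil M\sigma^{-1}\rceil$, row frequencies $1/L_1$ on the outer window) differ by a fixed amount $3\delta\log m$, so any point whose symbolic exponent stays within $\delta$ of some $\alpha$ must have frequencies deviating from the generic ones by a definite $a>0$ at every large $M$; the resulting nongeneric set $\widetilde Z(N,a)$ is then shown by direct counting of approximate squares (Lemma \ref{box}) to have upper box dimension at most $\dim_P\Lambda-ca^2$, and countable stability of packing dimension finishes the symbolic case, while the real case reduces to the same sets plus the run conditions via \eqref{above}, \eqref{below} and \eqref{smoothmucn}. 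Note two further consequences you would lose even after repairing your argument at $\alpha_0$: the paper's bound is uniform, giving $\dim_P$ of the whole (symbolically) regular set strictly below $\dim_P\Lambda$, whereas your route gives only pointwise-in-$\alpha$ strictness (since $\sup_{\alpha\neq\alpha_0}G(\alpha)=\dim_P\Lambda$), and it treats all $\alpha$ simultaneously rather than only the critical one singled out by the Legendre bound.
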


Consider now a special class of systems. Assume that the
Bedford-McMullen carpet has only two rows with $n_0$ and $n_1$
rectangles in each. Let $\mathcal{M}$ be the class of Bernoulli
measures with probabilities equidistributed in each row. Denote by
$p_0$ the probability of each rectangle in the first row and by
$p_1$ the probability of each rectangle in the second row; the
condition $n_0 p_0 + n_1 p_1 = 1$ must hold with $q_0=n_0p_0$ and
$q_1=n_1p_1$.

\begin{thm} \label{thm:2}
For $\mu\in \mathcal{M}$, we have that for $\alpha\in
[\alpha_m,\alpha_M]$
\begin{enumerate}
\item\label{thm2p2} If $\mu\in \mathcal{M}$ and
$\frac{\log(q_0/q_1)}{\sigma\log(n_0/n_1)}\neq -1$ then
\[
\dim_P X_\alpha = \dim_P X_{\alpha}^{\rm{symb}}=\dim_H X_{\alpha}
\]
\item\label{thm2p3} If $\mu\in\mathcal{M}$ is the unique measure
for which $\frac{\log(q_0/q_1)}{\sigma\log(n_0/n_1)}=-1$ then for
$\alpha\in (\alpha_m,\alpha_M)$ we have that $$ \dim_H
X_{\alpha}<\dim_P X_{\alpha}=\dim_P X_{\alpha}^{\rm{symb}}$$ and for
$\alpha\in\{\alpha_m,\alpha_M\}$
$$ \dim_H
X_{\alpha}=\dim_P X_{\alpha}=\dim_P X_{\alpha}^{\rm{symb}}.$$
\end{enumerate}
\end{thm}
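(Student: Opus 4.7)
First I reduce to the symbolic packing spectrum. A Borel--Cantelli argument shows $I_N(x),J_N(x)\to 0$ off a set of packing dimension zero, so by \eqref{above}--\eqref{below} one has $d_\mu(x)=\delta_\mu(x)$ there and hence $\dim_P X_\alpha=\dim_P X_\alpha^{\rm symb}$ for every $\alpha$. For $\mu\in\mathcal{M}$ the formula \eqref{mucn} becomes
\[
\mu(C_N(x)) = p_0^{\sigma N\beta_N(x)}p_1^{\sigma N(1-\beta_N(x))}q_0^{(1-\sigma)N\gamma_N(x)}q_1^{(1-\sigma)N(1-\gamma_N(x))},
\]
where $\beta_N(x)$ is the frequency of $i_k=0$ among $k\leq\lceil\sigma N\rceil$ and $\gamma_N(x)$ the analogous frequency among $\lceil\sigma N\rceil<k\leq N$. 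Thus $x\in X_\alpha^{\rm symb}$ iff
\[
\sigma\beta_N(x)\log(p_1/p_0)+(1-\sigma)\gamma_N(x)\log(q_1/q_0)\longrightarrow c_\alpha
\]
for an explicit constant $c_\alpha$; this cuts out a line segment $L_\alpha$ in the $(\beta,\gamma)$-unit square. The diagonal $\beta=\gamma$ represents Bernoulli measures in $\mathcal{M}$, and $L_\alpha$ meets it at a single point $(t_\alpha,t_\alpha)$. Standard multifractal formalism identifies $\dim_H X_\alpha^{\rm symb}\cdot\log m=H(t_\alpha)+\sigma[t_\alpha\log n_0+(1-t_\alpha)\log n_1]$.

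\textbf{Auxiliary measure and the role of the condition.} I take $\nu_{t_\alpha}\in\mathcal{M}$ as auxiliary measure and compute that $-\log\nu_{t_\alpha}(C_N(x))/(N\log m)=D(\beta_N(x),\gamma_N(x))$ for an explicit affine function $D$. Restricted to $L_\alpha$, $D$ is affine in one variable and reaches its extrema at the endpoints of $L_\alpha\cap[0,1]^2$. The key question is whether these extrema can be realised as $\limsup$-values of $\log\nu(C_N(x))/\log m^{-N}$ for some probability measure $\nu$ concentrated on $X_\alpha^{\rm symb}$, which by Frostman's lemma for packing dimension would yield a strict lower bound on $\dim_P X_\alpha^{\rm symb}$. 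As explained below, this reduces to whether one can construct points of $X_\alpha^{\rm symb}$ whose empirical profiles $(\beta_N,\gamma_N)$ oscillate non-trivially along a sparse sequence of scales, and the algebraic condition governing the feasibility of this construction is precisely $\log(q_0/q_1)/(\sigma\log(n_0/n_1))=-1$.

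\textbf{Parts (1) and (2).} For Part (1), when the condition fails, any block-alternation construction producing $\limsup D>D(t_\alpha,t_\alpha)$ forces $\sigma\beta_N(x)\log(p_1/p_0)+(1-\sigma)\gamma_N(x)\log(q_1/q_0)$ to drift away from $c_\alpha$, so the resulting points leave $X_\alpha^{\rm symb}$. A covering-by-approximate-squares argument stratified by profile then yields $\dim_P X_\alpha^{\rm symb}\leq\dim_H X_\alpha$, and with the trivial reverse inequality we conclude equality. For Part (2), under the exceptional condition and $\alpha\in(\alpha_m,\alpha_M)$, the block-alternation construction is consistent: I would build a subset $E_\alpha\subset X_\alpha^{\rm symb}$ of points whose profiles oscillate along a super-exponential sequence of scales between two distinct points of $L_\alpha\cap[0,1]^2$, together with a non-stationary product measure $\nu$ on $E_\alpha$. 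Frostman's lemma then yields $\dim_P X_\alpha^{\rm symb}\geq\max D|_{L_\alpha\cap[0,1]^2}>\dim_H X_\alpha$, and a matching upper bound comes from an adapted covering argument. For $\alpha\in\{\alpha_m,\alpha_M\}$ the segment $L_\alpha\cap[0,1]^2$ collapses to a single point, no oscillation is possible, and the Part-(1) argument gives $\dim_P X_\alpha=\dim_H X_\alpha$ again.

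\textbf{Main obstacle.} The delicate step is the block construction for Part (2): the block lengths $N_k$, the targeted endpoint profiles at each block, and the alternation pattern must be chosen so that every $x\in E_\alpha$ has $\delta_\mu(x)=\alpha$ as a \emph{genuine} limit, not merely the right $\liminf$ or $\limsup$. The geometric constraint $N_{k-1}\approx\sigma N_k$ coming from the approximate-square structure has to be balanced against the need to hit both endpoints of $L_\alpha\cap[0,1]^2$ infinitely often, and one must show that the exceptional arithmetic condition is precisely what makes this balance achievable --- and that its negation obstructs \emph{any} such construction, as required for the upper bound in Part (1).
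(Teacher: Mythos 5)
Your opening reduction is not valid: the set of points where $I_N$ or $J_N$ fails to tend to $0$ is \emph{not} of packing dimension zero. It is a limsup set which, inside every cylinder, contains a full sub-cylinder of comparable depth (append a long run of equal digits and then continue arbitrarily), so it has full upper box dimension in every relative neighbourhood and in fact full packing dimension; a Borel--Cantelli argument only gives that it is null for nice measures, not small in packing dimension. Since the possible discrepancy between $X_\alpha$ and $X_\alpha^{\rm symb}$ lives exactly on such points, you cannot conclude $\dim_P X_\alpha=\dim_P X_\alpha^{\rm symb}$ this way. The paper instead proves the inclusion $X_\alpha\subseteq X_\alpha^{\rm symb}$ by a genuine case analysis (Lemmas \ref{boundary} and \ref{lem:eqsymb}), giving one inequality, and obtains the other inequality in the exceptional case by exhibiting a measure which gives full mass to the \emph{real} set $X_\alpha$ (its typical points have no long runs, so real and symbolic dimensions agree a.e.) and has large upper local dimension; your Borel--Cantelli idea is the right tool for that measure-theoretic statement, but not for the set-theoretic one you assert.

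The second gap is in the heart of Part (2). Membership in $X_\alpha^{\rm symb}$ imposes the linear constraint on $(\beta_N,\gamma_N)$ at \emph{every} large scale $N$, and the consistency relation between consecutive approximate-square scales forces the recursion $P_{k+1}=A^{-1}P_k+B+o(1)$ for the row-$0$ frequencies $P_k$ at scales $M\sigma^{-k}$. A construction whose profile oscillates only along a super-exponential sequence of scales therefore fails: deep inside a long block both the initial segment $[0,\sigma N]$ and the middle segment $(\sigma N,N]$ carry the same off-line frequency $P\pm\delta$, so the symbolic local dimension at those intermediate scales is $\alpha(P\pm\delta)\neq\alpha$ and the constructed points leave $X_\alpha^{\rm symb}$ (hence also $X_\alpha$). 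The oscillation must occur at \emph{every} scale $\sigma^{-K}$, and it is precisely the condition $A=-1$ (so $F^2=\mathrm{id}$) that makes this consistent; this forced alternation is also why the value of the packing dimension is not simply the maximum of an affine exponent at the endpoints of your segment $L_\alpha$, but a variational quantity (the paper's $Y(\delta)$, optimized over the phase $\gamma$), and the strict inequality $\dim_P X_\alpha>\dim_H X_\alpha$ requires a separate argument (in the paper, the linear independence of $1,\sigma^{\gamma},\sigma^{-\gamma}$ together with $n_0\neq n_1$), which you assert rather than prove. Finally, your Part (1) sketch overlooks the boundary case $A=1$: there drift does \emph{not} expel points from $X_\alpha^{\rm symb}$, and the equality $\dim_P X_\alpha^{\rm symb}=\dim_H X_\alpha$ holds for a different reason, namely that the spectrum degenerates ($\alpha_m=\alpha_M$) and the covering bound is already saturated by $\dim_H\Lambda$.
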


\section{Proof of Theorem \ref{thm:1}}
The main part of the proof is to show that \eqref{condition} is necessary for the real and symbolic packing spectra to achieve $\dim_{P}(\Lambda)$. To prove this for a fixed Bernoulli measure we construct two sets of dimension strictly smaller than $\dim_{P}(\Lambda)$ and then prove that the first set contains all symbolically regular points and that the second set contains all regular points. The second part of the Theorem, that satisfying both conditions \eqref{condition} and \eqref{condition2} is sufficient for the maximum of the real and symbolic packing spectra to achieve $\dim_{P}(\Lambda)$, is easy to show.

Let $\mu$ be the Bernoulli measure defined by the probability
vector $\{p_{ij}\}$ defined on the digit set $D\subseteq
\{0,\ldots,m-1\} \times \{0,\ldots,n-1\}$, $m<n$. We assume that
$D$ contains at least two different $i$'s and at least two
different $j$'s, otherwise the system would be a self-similar IFS
on the line. We have

\[
\dim_B\Lambda = \dim_P\Lambda = s :=\frac 1 {\log m} (\sigma \log
L_0 + (1-\sigma) \log L_1),
\]
where $\sigma=\log m/\log n$.

Let us start with a simple geometric lemma. For $k_1,k_2\in\N$ with $K_1< k_2$ we denote by $F_{k_1}^{k_2}(i,j)(x)$
 the frequency of symbol $(a,b)$ in the
sequence $(i,j)_{k_1+1}(x),\ldots,(i,j)_{k_2}(x)$. More precisely
$$F_{k_1}^{k_2}(i,j)(x):=\frac{\#\{(i_l(x),j_l(x))=(i,j):k_2<l\leq k_1\}}{k_2-k_1}.$$
 We will extend this notation to the case when $k_1$ and $k_2$ are not integers and $k_2-k_1\geq 1$ in which case $F_{k_1}^{k_2}(i,j)(x)$  will denote the frequency of the symbol $(i,j)$ in the sequence $(i,j)_{\lceil k_1\rceil+1}(x),\ldots,(i,j)_{\lceil k_2\rceil}(x)$.
For $a>0$ we let $Z(N,a)$ be the set of points $x\in\Lambda$ such
that for any $M>N$ one of the following seven {\bf nongenericity}
conditions holds:

\begin{itemize}

\item[i)] $F_0^M(i,j)(x) \notin [1/L_0-a, 1/L_0+a]$ for some $(i,j)\in
D$, \item[ii)] $F_M^{\lceil M \sigma^{-1} \rceil}(i,j)(x) \notin
[1/L_0-a, 1/L_0+a]$ for some $(i,j)\in D$, \item[iii)] $\sum_j
F_{\lceil M \sigma^{-1}\rceil}^{\lceil M
    \sigma^{-2}\rceil}(x) \notin [1/n_i-a,1/n_iL_1+a]$ for some $i$,
\item[iv)] all the symbols $j_{M+1}(x),\ldots,j_{\lceil M (1+a)
\rceil }(x)$ are equal, \item[v)] all the symbols $i_{\lceil M
\sigma^{-1} \rceil
    +1}(x),\ldots,i_{\lceil M \sigma^{-1}(1+a) \rceil }(x)$ are equal.
\item[vi)] all the symbols $j_{\lceil M \sigma^{-1} \rceil
    +1}(x),\ldots,j_{\lceil M \sigma^{-1}(1+a) \rceil }(x)$ are equal,
\item[vii)] all the symbols $i_{\lceil M \sigma^{-2} \rceil
    +1}(x),\ldots,i_{\lceil M \sigma^{-2}(1+a) \rceil }(x)$ are equal.
\end{itemize}

Let $\widetilde{Z}(N,a)$ be the subset of $Z(N,a)$ consisting of
points $x\in\Lambda$ such that for any $M>N$ one of the first
three nongenericity conditions i), ii) or iii) holds.

\begin{lem} \label{box}
For any $a>0$

\[
\sup_N \overline{\dim}_B Z(N,a) < s.
\]
\end{lem}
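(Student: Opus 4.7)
The plan is to cover $Z(N,a)$ by approximate squares at some level $K$, of which the total number is $L_0^{\lceil \sigma K\rceil} L_1^{K-\lceil \sigma K\rceil} = m^{sK+O(1)}$. I will show that the minimal number of such squares needed to cover $Z(N,a)$ is at most $m^{sK} e^{-cK}$ for some $c=c(a)>0$ independent of $N$, which immediately gives $\overline{\dim}_B Z(N,a) \leq s - c/\log m$ uniformly in $N$. To do this, fix $K$ large and set $M := \lfloor \sigma^2 K/(1+a) \rfloor$, so that $\lceil M\sigma^{-2}(1+a) \rceil \leq K$; with this choice, all seven blocks appearing in conditions i)--vii) at scale $M$ lie inside $\{1,\ldots,K\}$. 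Once $K$ exceeds a threshold depending on $N$ we have $M > N$, so every $x \in Z(N,a)$ satisfies at least one of the seven conditions at this single scale $M$, and $Z(N,a)$ is contained in the union of the corresponding seven ``bad'' subsets. A union bound then reduces the problem to bounding each piece separately.

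For the frequency conditions i), ii) and iii), a standard large-deviation (Chernoff or Stirling) estimate bounds the number of length-$L$ sequences over an alphabet of size $A$ whose frequency of some specified symbol deviates by at least $a$ from its uniform value by $A^L e^{-\gamma(a) L}$. Applied with $A=L_0$ for conditions i), ii) and for the portion of iii) lying in the ``both $(i,j)$ fixed'' part of $C_K$, and with $A=L_1$ for the portion of iii) lying in the ``only $i$ fixed'' part, this yields an exponential saving of $e^{-\gamma(a) c_0 M}$ in the count of admissible blocks, which translates into a factor $e^{-c_1 K}$ in the total number of covering approximate squares.

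For the constant-block conditions iv)--vii), the constraint forces a block of length roughly $aM$ to have all $j$-digits (resp.\ $i$-digits) equal to a fixed extremal value in $\{0,m-1\}$ (resp.\ $\{0,n-1\}$). The assumption that $D$ contains at least two different values of $j$ and at least two different non-empty columns ensures that the per-level branching in the constrained block drops strictly below $L_0$ (respectively $L_1$, depending on which part of the approximate square contains the block), yielding another exponential saving of the form $e^{-c_2 M} = e^{-c_3 K}$.

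Summing the seven bounds, the minimal number of approximate squares $C_K$ needed to cover $Z(N,a)$ is at most $7 m^{sK} e^{-cK}$ for $K$ large, and hence $\overline{\dim}_B Z(N,a) \leq s - c/\log m < s$. Since $c$ depends only on $a$, this bound is uniform in $N$, as required. The main technical obstacle will be the bookkeeping of relative scales: making sure that for each of the seven conditions the constrained block lies in the expected portion of $C_K$, and in particular handling condition iii), whose block may straddle the boundary $k=\lceil \sigma K\rceil$ between the ``both $(i,j)$ fixed'' and ``only $i$ fixed'' parts of the approximate square, so that the correct ambient alphabet ($L_0$ versus $L_1$) is used in the large-deviation step on each side.
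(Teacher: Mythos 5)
Your overall strategy is the paper's: cover $Z(N,a)$ at a single deep level by approximate squares, beat the total count $m^{sK+O(1)}$ by an exponential factor using entropy/large-deviation counting for the frequency conditions and reduced branching for the constant-block conditions, then take a union bound over the seven conditions, uniformly in $N$. The genuine gap is in your treatment of condition iii). With your (forced) choice $M\approx\sigma^2K/(1+a)$ --- forced because the blocks of conditions vi) and vii) must be visible in $C_K$, which requires $M\sigma^{-2}(1+a)\lesssim K$ --- the block $(\lceil M\sigma^{-1}\rceil,\lceil M\sigma^{-2}\rceil]$ of condition iii) necessarily straddles $\lceil\sigma K\rceil$, with a lower piece of proportion $\lambda\approx\sigma a/(1-\sigma)$ lying in the part of the square where the full pair $(i,j)$ is recorded. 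Your proposed fix, ``use the correct ambient alphabet on each side'', does not work in general: the target frequency in iii) is $1/L_1$, while the entropy-maximising row frequency on the lower side is $n_i/L_0$. If $\sigma\,|n_i/L_0-1/L_1|>1-\sigma$ for some $i$ (e.g.\ $\sigma=3/4$ with two non-empty rows, $n_0=1$, $n_1=7$, $L_0=8$, $L_1=2$), then a level-$K$ square that is entropy-typical on both sides of $\lceil\sigma K\rceil$ (pair-uniform below, row frequency $1/L_1$ above) already has row-$i$ frequency over the straddling block deviating from $1/L_1$ by about $\lambda\,|n_i/L_0-1/L_1|>a$, i.e.\ it satisfies condition iii) at this $M$. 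Hence the ``condition iii) piece'' of your cover has cardinality $m^{sK-o(K)}$ --- no exponential saving --- and the union bound fails. (Most of those squares do not meet $Z(N,a)$, since points of $Z(N,a)$ must be nongeneric at every scale, but a covering argument that consults only one scale $M$ per level $K$ cannot exploit this.)

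The missing ingredient is the paper's re-anchoring step: it shows that if i), ii) or iii) holds at $(M,a)$ then the same condition holds at $(\lceil M(1+a')\rceil,a/2)$ for a suitable small $a'$, and then covers at level $K\approx M\sigma^{-2}(1+a')$. With this choice the re-anchored block of iii) is exactly the top portion $(\lceil\sigma K\rceil,K]$ of the square, where a deviation from $1/L_1$ genuinely costs entropy relative to $L_1^{(1-\sigma)K}$, while the initial $(1+a')$-portions of the constant blocks in iv)--vii) are still visible at level $K$; this is what makes the single-scale union bound legitimate. Your argument works as written only when $\sigma$ is small enough that the short straddling piece cannot absorb the whole deviation; for the lemma in full generality you need this re-anchoring or an equivalent two-scale device. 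A minor separate point: conditions iv)--vii) only require the symbols in the block to be equal, not equal to an extremal value in $\{0,m-1\}$ or $\{0,n-1\}$; this does not affect your counting, which only uses that the per-level branching drops strictly below $L_0$ (resp.\ $L_1$).
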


\begin{proof}
Let us begin by calculating the upper box counting dimension of
$\widetilde{Z}(N,a)$. The general idea of the proof is that we have
approximately $z_M= L_0^{M \sigma^{-1}} L_1^{M
  (\sigma^{-2} - \sigma^{-1})}$ approximate squares of level $\lceil M
\sigma^{-2}\rceil$ but for $N<M$ $\widetilde{Z}(N,a)$ intersects
at most $z_M e^{-c M a^2}$ of them.

There are three types of points $x\in \widetilde{Z}(N,a)$. If  $F_0^M(i,j)(x) \notin [1/4L_0-a, 1/L_0+a]$ for some $(i,j)\in D$
then there are only $L_0^{M(1-c_1a^2)}$ possible values of the
initial $M$ symbols $(i,j)_k(x)$. Indeed, the entropy of Bernoulli
measure with probabilities $p_1,\ldots,p_{L_0}$ is $\log L_0$, achieved
when all those probabilities are equal to $1/L_0$, and this maximum
is nonflat. Hence, all points of this type can be covered by at
most $L_0^{M(1-c_1a^2)} L_0^{\lceil M\sigma^{-1}\rceil -M} L_1^{\lceil M
  \sigma^{-2}\rceil - \lceil M \sigma^{-1} \rceil} \approx z_M L_0^{-c_1 M a^2}$ approximate squares of level $\lceil
M\sigma^{-2} \rceil$.

If  $F_M^{\lceil M\sigma^{-1} \rceil}(i,j)(x) \notin [1/L_0-a,
1/L_0+a]$ for some $(i,j)\in D$ then there are only $L_0^{(\lceil
M\sigma^{-1} \rceil -M)(1-c_2 a^2)}$ possible values of the
symbols $(i,j)_k(x)$ for $k=M+1,\ldots,\lceil M\sigma^{-1}\rceil$.
Hence, by a reasoning similar to the previous one, all points of
this type can be covered by at most $z_M L_0^{-c_2 a^2 M
(\sigma^{-1} -1)} $ approximate squares of level $\lceil
M\sigma^{-2} \rceil$.

If  $\sum_j F_{\lceil M \sigma^{-1}}^{\lceil M
    \sigma^{-2}}(i,j)(x) \notin [1/n_iL_1 - a, 1/n_iL_1+a]$ for some $i$ then there
  are only $l_1^{(\lceil M
  \sigma^{-2}\rceil - \lceil M \sigma^{-1} \rceil) (1-c_3 a^2)}$ possible
values of the symbols $i_k$ for $k=\lceil
M\sigma^{-1}\rceil+1,\ldots,\lceil M\sigma^{-2}\rceil$.  Hence,
all points of this type can be covered by at most $z_M   L_1^{-c_3
a^2 M(
  \sigma^{-2} - \sigma^{-1})}$  approximate squares of level $\lceil
M\sigma^{-2} \rceil$.

The points in $\widetilde{Z}(N,a)$ might satisfy different
nongenericity conditions for different $M$. However, for each
$M>N$ all the points in $\widetilde{Z}(N,a)$ can be covered by at
most $z_M e^{-c M a^2}$ approximate squares of level $\lceil M
\sigma^{-2} \rceil$. Hence,

\[
\overline{\dim}_B \widetilde{Z}(N,a) \leq s - \tilde{c} a^2.
\]

Consider now the set $Z(N,a)$. Instead of using approximate
squares of level $\lceil M\sigma^{-2} \rceil$, we will use squares
of level $\lceil M\sigma^{-2} (1+a/4) \rceil$. Note first, that if
$x\in \Lambda$ satisfies one of nongenericity conditions i), ii),
iii) for $M$ and $a$, it will also satisfy them for $\lceil
M(1+a/4) \rceil$ and $a/2$.

Indeed, if $x$ satisfies nongenericity condition i) then the symbols
$((i,j)_k(x))_{k=0}^{\lceil M(1+a/4) \rceil}
$ are the same symbols as
$((i,j)_k(x))_{k=0}^{M}$ (where frequencies differ from
$(1/L_0,\ldots,1/L_0)$ by at least $a$) plus approximately $Ma/4$ new
symbols $((i,j)_k(x))_{k=M+1}^{\lceil M(1+a/4) \rceil}$ which can
change the frequencies at most by $a/4$. If $x$ satisfies either
nongenericity ii) or iii) then passing from $M$ to $\lceil
M(1+a/4) \rceil$ changes the considered ranges of $k$ on both
ends: some symbols in the beginning drop out, some symbols at the
end are added. Altogether the change of frequencies cannot top $a
(2+\sigma)/4(1+\sigma)<a/2$.

Hence, all the points $x\in\Lambda$ satisfying nongenericity conditions i),
ii) or iii) for given $M$ and $a$ can be covered with at most
$z_{M(1+a/4)} m^{-\tilde{c}
  M \sigma^{-2}(1+a/4) a^2/4}$ sets of
diameter $m^{-\lceil M
  \sigma^{-2}(1+a/4)\rceil}$, like in the first part of proof.

If $x\in\Lambda$ satisfies nongenericity condition iv) then the sequence
$((i,j)_k(x))_{k=M+1}^{\lceil M (1+a/4) \rceil}$ can take only
$(L_0-1)^{ \lceil a M \rceil/4}$ possible values. Hence, the points
of this type can be covered by at most  $z_{M(1+a/4)} ((L_0-1)/L_0)^{
  M a/4}$ approximate squares of level $\lceil
M\sigma^{-2} (1+a/4) \rceil$.

The cases of nongenericity conditions v) or vi) are almost identical, the
sequence $((i,j)_k(x))_{\lceil M \sigma^{-1}+1 \rceil}^{\lceil M
\sigma^{-1} (1+a/4) \rceil}$ can take only $(L_0-1)^{ \lceil a
\sigma^{-1} M/4 \rceil}$ possible values and we can cover those
points with at most  $z_{M(1+a/4)} ((L_0-1)/L_0)^{
  M \sigma^{-1} a/4}$ approximate squares of level $\lceil
M\sigma^{-2} (1+a/4) \rceil$.

Finally, if  $x\in\Lambda$ satisfies nongenericity condition vii) then the
sequence $\{i_k(x)\}_{k=\lceil M \sigma^{-2} \rceil+1}^{\lceil M
\sigma^{-2} (1+a/4) \rceil}$ can take only $(L_1-1)^{ \lceil a M
\sigma^{-2}/4 \rceil}$ possible values. Hence, the points of this
type can be covered by at most  $z_{M(1+a/4)}((L_1-1)/L_1)^{
  M \sigma^{-2} a/4}$ approximate squares of level $\lceil
M\sigma^{-2} (1+a/4) \rceil$.

As $a$ is small, $a^2$ is small in comparison to $a$. Hence, again we get a quadratic bound
(independent from $N$) for the upper box counting dimension of
$Z(N,a)$:

\[
\overline{\dim}_B Z(N,a) \leq s - c a^2.
\]
\end{proof}

We now proceed with the proof of the theorem. Let us consider the
symbolic local dimensions first. Assume that \eqref{condition}
does not hold and let

\[
\delta = \left| \frac {1-\sigma} {3\log m} \cdot \sum_i
\left(\frac 1 {L_1} -\frac
  {n_i}{L_0}\right) \log q_i \right|.
\]

Let
\[
X_{\alpha, N, \delta} = \{x\in\Lambda; \alpha - \delta < \frac
{\log \mu(C_{N_0}(x))} {-N_0\log m} < \alpha + \delta \ \forall
N_0>N\}.
\]

We have

\[
X_{\rm regsymb} \subseteq \bigcup_\alpha \bigcup_N X_{\alpha, N,
\delta}.
\]

Let $x\in X_{\alpha, N,\delta}$. Let $a_0\geq 0$ be the smallest
number for which the following are true:

\begin{itemize}
\item[i)] $1/L_0-a_0 \leq F_0^M(i,j)(x) \leq 1/L_0+a_0$ for all
$(i,j)\in D$, 
\item[ii)] $1/L_0-a_0 \leq F_M^{ M \sigma^{-1}
}(i,j)(x) \leq  1/L_0+a_0$ for all $(i,j)\in D$, 
\item[iii)]
$1/n_iL_1 -a_0 \leq \sum_j F_{ M \sigma^{-1}}^{ M
    \sigma^{-2}}(i,j)(x) \leq 1/n_iL_1+a_0$ for all $i$.
\end{itemize}

By \eqref{mucn}, we have
\begin{equation} \label{e1}
- \frac {\log m} {M\sigma^{-1}}\log \mu(C_{
M\sigma^{-1}}(x)) = \sigma \sum_{(i,j)\in D} \frac 1 {L_0} \log
p_{i,j}
  +(1-\sigma) \sum_i \frac {n_i} {L_0} \log q_i +Z_1
\end{equation}
and
\begin{equation} \label{e2}
- \frac {\log m} {M\sigma^{-2}}\log \mu(C_{
M\sigma^{-2}}(x)) = \sigma  \sum_{(i,j)\in D} \frac 1 {L_0} \log
p_{i,j} + (1-\sigma) \sum_i \frac 1{L_1} \log q_i + Z_2,
\end{equation}

where

\[
|Z_1|, |Z_2| < a_0 \cdot \max(\max_{i,j} |\log p_{i,j}|, n \max_i
|\log q_i|) + O(1/M).
\]

Denoting $T= \max(\max_{i,j} |\log p_{i,j}|, n \max_i |\log
q_i|)$, we see that, as the left hand sides of \eqref{e1} and
\eqref{e2} can differ at most by $2\delta \log m$ and the right
hand sides differ at least by $3\delta \log m - |Z_1| - |Z_2|$, we
must have

\[
a_0 > a= \frac {\log m} {2T} \cdot \delta.
\]

Hence,
\[
X_{\rm regsymb} \subseteq \bigcup_N \widetilde{Z}(N,a)
\]
and the symbolic part of the assertion follows by Lemma \ref{box}.

Now consider the regular points for real local dimension. Let
$x\in X_{\rm
  reg}$. There exist $\alpha$ (which can be chosen from a finite set) and
$N>0$ such that for all $M>N$

\[
\frac {\log \mu(B_{m^{-M}}(x))} {-M \log m} \in [\alpha -
\delta/2, \alpha + \delta/2].
\]

There are two cases: either the inequality

\[
\frac {\log \mu(C_{\tilde{M}}(x))} {-\tilde{M} \log m} \in [\alpha
- \delta, \alpha + \delta]
\]
holds for both $\tilde{M}=\lceil M \sigma^{-1} \rceil$ and
$\tilde{M}=\lceil M \sigma^{-2} \rceil$ or it does not hold for at
least one of those. If it holds then, like above, $x$ must satisfy
 nongenericity condition i), ii) or iii) for $M$ and $a$. If it does not hold
for one of possible values of $\tilde{M}$ then, necessarily, the
measures of $C_{\tilde{M}}(x)$ and $B_{m^{-\tilde{M}}}(x)$ must
differ at least by factor $m^{\tilde{M}\delta/2}$. However, by
\eqref{above}, \eqref{below} and
  \eqref{smoothmucn} it is only possible if either $I_{\tilde{M}}(x)$ or $J_{\tilde{M}}(x)$ are
  greater than $\tilde{a}=\delta |\log K|/2\log m$. Hence, in this case $x$
  must satisfy nongenericity condition iv), v), vi) or vii) for $M$ and $\tilde{a}$. We came to
  conclusion that

\[
X_{\rm reg} \subseteq \bigcup_N Z(N, \min(a, \tilde{a}))
\]
and we are done by Lemma \ref{box}.

For the second statement of the theorem, if both \eqref{condition}
and \eqref{condition2} hold, all points $x\in \Lambda$ which
symbolic expansions can be divided into parts in which symbols
$(i,j)\in D$ appear with frequencies $\{1/L_0,\ldots,1/L_0\}$ and
parts with frequencies $\{1/n_iL_1\}$ belong to $X_{\alpha_0}^{\rm
symb}$. If in addition we demand that they do not have long
stretches of identical $i$'s or $j$'s then the real local dimension
at those points is also $\alpha_0$. It is easy to check that those
points have full packing dimension.

\section{Proof of Theorem \ref{thm:2}}
The proof is divided into four parts. In the first part we relate the symbolic and real local dimensions at any point. We continue by looking at how the local dimensions can be determined by observing the frequency of digits in initial parts of symbolic expansions. This now naturally splits the argument into two cases which are the final two parts of the proof. The first case corresponds to part \ref{thm2p2} of the Theorem and the second case corresponds to part \ref{thm2p3} of the Theorem. In fact part \ref{thm2p3} of the Theorem is about only one measure but it will turn out this is the only case when the Hausdorff and packing spectra are different and it is by far the most difficult case.

\subsection{Real and symbolic local dimensions}
We start by studying the relationship between the symbolic and real local dimensions. 
\begin{lem}\label{boundary}
If $\alpha\in [\alpha_{\min},\alpha_{\max}]$ and $d_{\mu}(x)=\alpha$ but $d_{\mu}^{\rm{symb}}(x)\neq \alpha$ then the symbolic expansion, $(i_k,j_k)$ for $x$ satisfies that there must exist $\eta>0$ and infinitely many integers $n_j$
such that $i_{n_j}=i_{n_j+k}$ for all $0\leq k\leq [\eta n_j]$.
\end{lem}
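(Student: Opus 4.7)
I will prove the contrapositive: if for every $\eta>0$ only finitely many $n$ satisfy $i_n(x)=i_{n+k}(x)$ for all $0\le k\le\lfloor\eta n\rfloor$, then $\delta_\mu(x)=d_\mu(x)$, contradicting the hypothesis that $d_\mu(x)=\alpha$ while $\delta_\mu(x)\ne\alpha$. The containment \eqref{above} supplies $\mu(C_N(x))\le\mu(B_{m^{-N}}(x))$ for free, so it suffices to prove a matching upper bound $\mu(B_{m^{-N}}(x))\le m^{o(N)}\mu(C_N(x))$; passing to $\log$ and dividing by $-N\log m$ then forces the two limits to coincide.

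To control the ball from above, I would cover $B_{m^{-N}}(x)$ by $C_N(x)$ and a uniformly bounded number of adjacent level-$N$ approximate squares. These split into \emph{fast-direction} neighbors (obtained by perturbing $j_{\lceil\sigma N\rceil}(x)$, possibly through a borrow cascade along a stretch of $j$'s in $\{0,n-1\}$) and \emph{slow-direction} neighbors (obtained by perturbing $i_N(x)$, possibly through a borrow cascade along a stretch of equal $i$'s, which in the two-row case automatically lies in $\{0,m-1\}$). Since $\mu\in\mathcal{M}$ satisfies $p_{i,j}=p_i$ independently of $j$, formula \eqref{mucn} shows that every fast-direction neighbor has $\mu$-measure \emph{exactly} equal to $\mu(C_N(x))$. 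Thus the fast direction only contributes a bounded constant to the ratio.

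The slow-direction neighbors are the only source of growth. A borrow cascade of depth $s$ requires a stretch $i_{N-s+1}(x)=\dots=i_N(x)$ of constant value and alters $C_N(x)$ in exactly $s+1$ consecutive $i$-coordinates. A direct evaluation of \eqref{mucn}, handled separately for the levels $k\le\lceil\sigma N\rceil$ (where $p_{i_k,j_k}$ appears) and $k>\lceil\sigma N\rceil$ (where $q_{i_k}$ appears), bounds the corresponding measure ratio by a constant multiple of $(\max(q_0,q_1)/\min(q_0,q_1))^{s+1}$. Geometrically, the run of $s$ equal $i$-digits places $x$ within $y$-distance $m^{-N}$ of the boundary of its level-$(N-s)$ ancestor cell, so the ball does overlap the cascaded neighbor. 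The contrapositive hypothesis, rewritten through the reparameterization $n\leftrightarrow N-s+1$, yields $s(N)=o(N)$; hence $(q_{\max}/q_{\min})^{s+1}=m^{o(N)}$ and the required sub-exponential bound follows.

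I expect the main obstacle to be the cascade analysis above. The cleanest situation is when the cascade sits entirely in the ``$q$-range'' $k>\lceil\sigma N\rceil$; the harder part is to treat cascades deep enough to cross into the ``$p$-range'' $k\le\lceil\sigma N\rceil$, where one must track not just the extremality of the $i$-digits but also the column structure of $D$. Once a uniform bound of the advertised form is in hand, the remaining deductions are standard.
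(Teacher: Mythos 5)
Your plan is correct and rests on the same two observations that drive the paper's argument: for $\mu\in\mathcal{M}$ the measure of an approximate square depends only on its $i$-digits (so horizontal neighbours have measure zero or exactly $\mu(C_N(x))$), and the only way a ball can pick up extra mass is through long runs of equal $i$-digits. The implementation differs, though, in how the vertical direction is handled. The paper fixes $N$, sets $k(N)=\inf\{z: i_N\neq i_{N+z}\}$ (a \emph{forward} run), and shrinks the radius to $m^{-(k(N)+1)}m^{-N}$ so that the ball meets only approximate squares with the same $i$-digits; it then sandwiches $\mu(C_N(x))$ between the measures of two balls whose radii differ by the factor $m^{-k(N)-1}$, and concludes that a discrepancy between $d_\mu$ and $\delta_\mu$ forces $k(N)\neq o(N)$. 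No estimate on cascaded neighbours is ever needed. You instead keep the radius at $m^{-N}$, so the ball always pokes into the adjacent row-strips, and you pay for this with the ratio bound $(q_{\max}/q_{\min})^{s+1}$ where $s$ is the \emph{backward} run ending at $N$; this is fine, and your reindexing $n\leftrightarrow N-s+1$ correctly converts the lemma's forward-run statement into $s(N)=o(N)$, but it is a slightly heavier route to the same place. Two small imprecisions: a fast-direction neighbour has measure \emph{zero or} equal to $\mu(C_N(x))$ (the corresponding columns may be absent from $D$), which is harmless for the upper bound; and your remark that equal $i$'s ``automatically lie in $\{0,m-1\}$'' is literally true only when $m=2$ --- for two rows inside a larger alphabet $m>2$ the reachable vertical neighbours either do not exist or are obtained by a cascade of bounded depth, so your ratio bound still holds, but it should be justified by that observation (a separation estimate of order $m^{-N}$ between distinct row-cylinders) rather than by extremality of the digits. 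With those points tidied up, your ``harder part'' (cascades crossing into the $p$-range) is actually easy here, since $p_{i}=q_i/n_i$ makes each crossed position contribute only another bounded factor.
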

\begin{proof}
To start the proof fix a positive integer $N$ and let $k(N)=\inf\{z:i_{N}\neq i_{N+z}\}$. Recall that the ball $B_{cm^{-N}}(x)$
contains $C_N(x)$ for $c=n+1$ and if $c=m^{-(k+1)}$, it is contained in $C_N(x)\cup C_N(x')$ for some $x'$ for which $i_k(x')=i_k(x) \forall k$. Note that either $\mu(C_N(x'))=0$ or $\mu(C_N(x'))=\mu(C_N(x))$. Hence
$$\mu(B_{(n+1)m^{-N}}(x)\leq\mu(C_N(x))\leq 2\mu(B_{(m^{-k+1}m^{-N}}(x))$$
and if the symbolic and real local dimensions are not equal then we cannot have that $K(N)=\mathit{o}(N)$ and the result follows.
\end{proof}

\begin{lem} \label{lem:eqsymb}
\[
X_{\alpha} \subseteq X_{\alpha}^{\rm{symb}}
\]
\end{lem}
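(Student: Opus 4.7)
\emph{Setup.} Given $x\in X_\alpha$ (so $d_\mu(x)=\alpha$), the goal is $\delta_\mu(x)=\alpha$. The plan is to bracket $\delta_\mu(x)$ between $\alpha$ from above and below using the geometric sandwich \eqref{above}--\eqref{below}. The inclusion $C_N(x)\subseteq B_{m^{-N}}(x)$ immediately gives $\mu(C_N(x))\le\mu(B_{m^{-N}}(x))=m^{-N(\alpha+o(1))}$, hence $\liminf_N(-\log\mu(C_N(x)))/(N\log m)\ge\alpha$. The reverse inclusion \eqref{below} is valid whenever $\delta>\max(I_N(x),J_N(x))$ and yields $\mu(C_N(x))\ge m^{-N(1+\delta)(\alpha+o(1))}$, hence $\limsup_N(-\log\mu(C_N(x)))/(N\log m)\le(1+\limsup_N\max(I_N,J_N))\alpha$. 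Matching liminf to limsup therefore reduces the lemma to proving $\max(I_N(x), J_N(x))\to 0$.

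\emph{Reduction to $i$-stretches.} For $\mu\in\mathcal{M}$ the condition $J_N(x)\to 0$ is unnecessary: because $p_{i,j}=q_i/n_i$ depends only on $i$, any two approximate squares at the same level sharing the same $i$-sequence carry identical $\mu$-measure, so horizontal neighbors of $C_N(x)$ contribute only a bounded multiplicative factor to $\mu(B_{m^{-N}}(x))/\mu(C_N(x))$, which is absorbed into the $o(N)$ of the dimensional balance. It remains to prove $I_N(x)\to 0$. Suppose, toward a contradiction, that $I_{N_k}(x)\ge\eta>0$ along a subsequence; WLOG the length-$\eta N_k$ run starting at $N_k+1$ consists of $0$'s, placing $x$ within $m^{-N_k(1+\eta)}$ of the bottom edge of $C_{N_k}(x)$ so that $B_{m^{-N_k}}(x)$ covers a uniform fraction of the vertical neighbor $C'$ directly below. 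From \eqref{mucn} and the binary carry procedure, a direct computation gives
\[
\mu(C')/\mu(C_{N_k}(x)) = (q_1/q_0)^{L-1}
\]
(up to bounded $(n_0/n_1)^{\pm}$ corrections from carries straddling $\lceil\sigma N_k\rceil$), where $L$ is the trailing $0$-run in $i_1\ldots i_{N_k}$; when $L$ grows linearly with $N_k$ this ratio is exponential in $N_k$ unless $q_0=q_1$.

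\emph{Contradiction via consecutive ratios.} If $q_0=q_1$ neighbor measures equal $\mu(C_{N_k}(x))$ and $\delta_\mu=d_\mu$ follows trivially; so assume $q_0\ne q_1$. Within the stretch, as $N\to N+1$ we have $L\to L+1$, and the requirement that $d_\mu(x)=\alpha$ be a genuine limit in $N$ then forces $\mu(C_{N+1})/\mu(C_N)= m^{-\alpha+o(1)}\cdot(q_0/q_1)$. On the other hand, \eqref{mucn} gives the same ratio directly as $q_0$ in the bulk case $\lceil\sigma(N+1)\rceil=\lceil\sigma N\rceil$, and as $q_0/n_{i_M}$ in the crossing case $\lceil\sigma(N+1)\rceil=\lceil\sigma N\rceil+1$, with $i_M\in\{0,1\}$ determined by $x$'s pre-stretch digits. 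Both bulk and crossing cases occur for $N$ within a linear-in-$N_k$ stretch, and equating the two expressions for the ratio forces $m^{-\alpha}$ to simultaneously equal $q_1$, $q_1/n_0$, and $q_1/n_1$---an impossibility for non-degenerate carpets ($n_0,n_1\ge 2$). The main obstacle is the combinatorial bookkeeping of these crossings: verifying that both $i_M$ values genuinely arise at the relevant crossing indices (a property of the pre-stretch expansion of $x$), and separately handling the boundary values $\alpha\in\{\alpha_m,\alpha_M\}$, where the above constraints may be vacuous but $x$ itself must then have an eventually constant $i$-expansion, a case easily verified by direct computation.
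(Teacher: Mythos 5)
Your reduction is where the argument breaks. You reduce the lemma to the claim that $I_N(x)\to 0$ for \emph{every} $x\in X_\alpha$, and then try to contradict a linear-length $i$-run. But that claim is strictly stronger than the lemma and is false: points of $X_\alpha$ can have arbitrarily long constant $i$-runs. For example (two rows, $q_1<q_0$), take $i_N=1$ followed by a run of $0$'s of length $\eta N$: for each level $M\in[N,N(1+\eta)]$ the approximate square vertically adjacent to $C_M(x)$ is obtained by borrowing through the run, so by \eqref{mucn} its measure is smaller than $\mu(C_M(x))$ by a factor exponential in $M-N$; hence $\mu(B_{m^{-M}}(x))\asymp\mu(C_M(x))$ and neither the real nor the symbolic dimension is disturbed by the run. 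Inserting such blocks along a sparse subsequence keeps $x\in X_\alpha$ while $I_{N_k}(x)\ge\eta$. This is exactly why the paper does not try to exclude runs: via Lemma \ref{boundary} it assumes a run and then compares $\mu(C_M(x))$ with $\mu(C_M(y_N))$ for the flipped point $y_N$, showing that either the neighbour is not heavier (run harmless, symbolic $=$ real along those scales) or the monotonicity of $\frac 1M\log\bigl(\mu(C_M(y_N))/\mu(C_M(x))\bigr)$ contradicts the two-sided ball estimates; the hard subcase ($q_0\neq q_1$, run longer than $\sigma^{-1}$) needs the extra argument at level $N(1+\delta)\sigma^{-1}$, which first pins $\alpha$ to $\alpha_m$. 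None of this is recovered by your scheme.

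The "consecutive ratios" step is also invalid on its own terms. From $d_\mu(x)=\alpha$ you conclude that $\mu(C_{N+1}(x))/\mu(C_N(x))=m^{-\alpha+o(1)}(q_0/q_1)$ for each $N$ in the run, and then match this against the digitwise value of the ratio to force $m^{-\alpha}$ to equal $q_1$, $q_1/n_0$ and $q_1/n_1$ simultaneously. But the existence of the limit defining $d_\mu(x)$ only gives $\log\mu(B_{m^{-N}}(x))=-\alpha N\log m+o(N)$; it constrains Cesàro behaviour, not single-step ratios of ball measures, and a fortiori not $\mu(C_{N+1}(x))/\mu(C_N(x))$, which differs from the ball ratio by the unconstrained neighbour contribution. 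So no identity of the form $m^{-\alpha}=q_1$ follows (and such identities would be absurd: they fail for every interior $\alpha$, where the lemma must nonetheless hold). A further, smaller error: the correction you call a bounded $(n_0/n_1)^{\pm}$ factor from carries straddling $\lceil\sigma N_k\rceil$ is not bounded once the trailing run reaches into the first $\lceil\sigma N_k\rceil$ digits. In short, the proposed route proves a false statement by an argument whose key step misuses the definition of local dimension, and it does not yield the lemma.
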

\begin{proof}

Let $x\in X_{\alpha}$. For any $\epsilon>0$ there must exist $N_0$ such that
$$x\in\{x\in\Lambda:\mu(B(x,m^{-N}))\in (m^{-N(\alpha-\epsilon)},m^{-N(\alpha-\epsilon)})\text{ for all }N\geq N_0\}.$$
We will denote this set by $Y_{\alpha,N_0,\epsilon}$. By Lemma \ref{boundary} there must exist $\delta>0$ and infinitely
many $N\geq N_0$ such that $K(N)/N\geq\delta$. We will fix such an $N>\text{something}$ and assume without loss of generality that $i_N=1$. For $c>n$
and $M\in [N,N(1+\delta)]$ the ball $B_{cm^{-M}}(x)$ will contain both
the approximate square $C_M(x)$ and approximate square $C_M(y_N)$,
where $y_N$ is the point with the same symbolic expansion as $x$
except $i_k(y)=1-i_k(x)$ for $k\in [N,N(1+\delta)]$. Similarly,
for $c$ small the ball $B_{cm^{-M}}(x)$ will be contained in
$C_M(x)\cup C_M(y_N)\cup C_M(x')\cup C_M(y_N')$. Hence, as $x\in Y_{\alpha,N_0,\epsilon}$
 for all $M\in [N,N(1+\delta)]$ we must have
\[
1/4m^{-M(\alpha+2\epsilon)}\leq\max(\mu(C_M(x)), \mu(C_M(y_N)))\leq m^{-M(\alpha-2\epsilon)}.
\]
We can also assume that for at least some $N$ and $M$ this
maximum is not $\mu(C_M(x))$, otherwise the symbolic and real
local dimensions at $x$ would be equal. There are now several
cases to consider:

\textbf{Case I}: $q_0=q_1$, $\delta\leq\sigma^{-1}$. In this situation
$\mu(C_M(x))= \mu(C_M(y_N))$ for all $M\in [N,N(1+\delta)]$, because
the measure of an approximate square of level $M$ only depends on
the initial $M\sigma$ symbols from the symbolic expansion, which
are the same for $x$ and for $y_N$.

\textbf{Case II}: $q_0=q_1$, $\delta>\sigma^{-1}$. In this situation we
have $\mu(C_M(x))= \mu(C_M(y_N))$ for all $M\in [N,N\sigma^{-1}]$
like in case I, but for $M\in [N\sigma^{-1}, N(1+\delta)]$ we have

\[
\frac{\mu(C_M(y_N))}{\mu(C_M(x))} = \left( \frac{p_1}{p_0}
\right)^{\sigma M-N-1}.
\]

As $\mu(C_M(y_N))$ is greater for at least some $M$, we must have
$p_1>p_0$. Hence, $-1/M \log(\mu(C_M(y_N))/\mu(C_M(x)))$ is a
monotonically increasing function. By our assumptions about $x$ we may assume that for all $M\in [N,N(1+\delta)]$ ,
\[
\mu(C_M(y_N))\in (1/4m^{-M(\alpha+2\epsilon)},m^{-M(\alpha-2\epsilon)})
\]
and there exists $M\in [N,N(1+\delta)]$ such that
\[
\mu(C_M(x))<1/4m^{-M(\alpha+2\epsilon)}.
\]
Since $-1/M \log(\mu(C_M(y_N))/\mu(C_M(x)))$ is monotonically increasing in $M$ we have that
\[
\mu(C_{N(1+\delta)}(x)) < m^{-N(1+\delta)(\alpha+2\epsilon)}
\]
At the same time, our assumptions give us
\[
\mu(C_{N(1+\delta)}(x)) \geq m^{-N(1+\delta)(\alpha+2\epsilon)}
\]
which is a contradiction.

\textbf{Case III}: $q_0\neq q_1$, $\delta\leq\sigma^{-1}$. We have for all
$M\in [N,N(1+\delta)]$

\[
\frac {\mu(C_M(y_N))} {\mu(C_M(x))} = \left( \frac {q_1} {q_0}
\right)^{M-N-1}.
\]

Like in case II, $-1/M \log(\mu(C_M(y_N))/\mu(C_M(x)))$ is a
monotonically increasing function, we must have for all $M\in
[N,N(1+\delta)]$

\[
\mu(C_M(y_N)) \in (1/4m^{-M(\alpha+2\epsilon)},m^{-M(\alpha-2\epsilon)})
\]
but we also have

\[
\mu(C_{N(1+\delta)}(x)) \approx m^{-N(1+\delta)\alpha},
\]
which leads to a similar contradiction.

\textbf{Case IV}: $q_0\neq q_1$, $\delta>\sigma^{-1}$. This time, $-1/M
\log(\mu(C_M(y_N))/\mu(C_M(x)))$ is a monotonically increasing
function for $M\in [N,N\sigma^{-1}]$ but might be monotonically
decreasing for $M\in [N\sigma^{-1}, N(1+\delta)]$. Hence, if $\delta$
satisfies

\begin{equation} \label{cond3}
m^{-2N(1+\delta)\epsilon}\leq\left( \frac {q_1} {q_0} \right)^{N(1+\delta) (1-\sigma)} \left( \frac
{p_1} {p_0} \right)^{N((\delta+1)\sigma-1)}\leq m^{2N(1+\delta)\epsilon}
\end{equation}
then it is possible that
\[
\frac 1 {N(1+\delta)\log m} \left|\log \mu(C_{N(1+\delta)}(x)) -
\log \mu(C_{N(1+\delta)}(y_N)) \right| < 2\epsilon
\]
and the contradiction like in cases II, III
does not happen. However, note that

\[
\mu(C_{N(1+\delta)}(y_N)) = \mu(C_{N\sigma^{-1}}(y_N)) \cdot
q_1^{N(1+\delta - \sigma^{-1})} p_1^{N((1+\delta)\sigma-1)}.
\]
As

\[
\frac {1}{N(1+\delta)} \log \mu(C_{N(1+\delta)}(y_N))\in ((-\alpha-2\epsilon)\log m,(-\alpha+2\epsilon)\log m)\] 
and 
\[\frac {1}{N\sigma^{-1}} \log \mu(C_{N\sigma^{-1}}(y_N))\in ((-\alpha-2\epsilon)\log m,(-\alpha+2\epsilon)\log m)
\]
 we must have that
\[
-\alpha \log m = \sigma \log p_1 + (1-\sigma) \log q_1,
\]
which implies $\alpha=\alpha_{\text{min}}$.

Consider now $M=N(1+\delta)\sigma^{-1}$. In the symbolic expansion of
$x$ there are at least $N\delta$ zeros on the first
$N(1+\delta)$ places. Hence, the measure of any approximate square of
level $N(1+\delta)\sigma^{-1}$ that can intersect the ball
$B_{m^{-N\delta\sigma^{-1}}}(x)$ (the symbolic descriptions of
those squares share the first $N(1+\delta)$ symbols with $x$) is at most

\[
p_1^N p_0^{N\delta} q_0^{N(1+\delta)(\sigma^{-1}-1)} = m^{-\alpha
N(1+\delta)\sigma^{-1}} \cdot \left(\frac {p_0} {p_1} \right)^{N\delta} \cdot \left(\frac {q_0} {q_1}\right)^{N(1+\delta)(\sigma^{-1} -1)}
\]
(remember that $q_1<q_0$ and $p_1>p_0$ by \eqref{cond3}). By \eqref{cond3},
\[
\left(\frac {p_0} {p_1} \right)^{N\delta} \cdot \left(\frac {q_0} {q_1}\right)^{N(1+\delta)(\sigma^{-1} -1)} \leq m^{2N(1+\delta)\sigma^{-1}\epsilon} \cdot \left(\frac {p_0} {p_1}\right)^{N(\sigma^{-1}-1)} < m^{-N(1+\delta)\sigma^{-1} \epsilon}.
\]

Hence,
$x$ cannot belong to $Y_{\alpha, N_0, \epsilon}$.
\end{proof}

\subsection{Local dimensions and frequencies}
We now look at the relationship between frequencies of digits and local dimensions.
To do this we need to introduce some notation adapted to this setting. Recall that in this case we have two rows and the measure $\mu$ is a Bernoulli measure giving weight $q_0/n_0$ to each rectangle in the first row and $q_1/n_1$ to each rectangle in the second row. Using this for $P\in (0,1)$ we will define the following quantities:
\begin{enumerate}
\item
$H(P)=-P\log P-(1-P)\log(1-P)$,
\item
$H_q(P)=-P\log q_0-(1-P)\log q_1$,
\item
$CH(P)=P\log (n_0^{-1}P)-(1-P)\log (n_1^{-1}P)$,
\item
$CH_q(P)=P\log (n_0^{-1}q_0)-P\log (n_1^{-1}q_1)$.
\end{enumerate}
It will also be convenient to let $G_k=\sum_{j=0}^{n-1} F_0^{k-1}(0,j)(x)$

Let $\mu\in\mathcal{M}$, $\alpha\in [\alpha_m,\alpha_M]$ and $x\in X_{\alpha}^{\rm{symb}}$.
For any positive $\epsilon$ we have 
\begin{equation} \label{epsdef}
X_\alpha^{\rm symb} \subseteq	 \bigcup_{N_0} \bigcap_{N>N_0} X_{\alpha, N, \epsilon}.
\end{equation}
We fix some small positive $\epsilon$ (in the future we will
determine, how small it should be). We assume that $x\in \bigcap_{N>N_0} X_{\alpha, N, \epsilon}$ for some fixed $N_0$.
Let
\[
\beta(x) = \lim_{k\to\infty} G_k(x).
\]
be the frequency of appearance of the 0 row in the symbolic expansion of $x$ (if it exists). We will also use the finite approximations
\[
P_k = G_{M\sigma^{2-k}-1}(x)
\]
and
\[
P_k'
= \sum_{j=0}^{n-1} F_{M\sigma^{2-k}}^{M\sigma^{1-k}-1}(0,j)(x),
\]
where $M>N_0$ is fixed, to be defined later.

Our first step is the standard calculation:
\begin{lem} \label{lem:calc0}
If $\beta(x) = \beta$ then

\[
d_\mu^{\rm {symb}}(x) =  \alpha(\beta) :=\frac{1}{\log m} \cdot (\sigma CH_q(\beta) + (1-\sigma) (H_q(\beta)) )).
\]
\end{lem}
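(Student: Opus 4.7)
The plan is a direct calculation from the product formula \eqref{mucn}, exploiting the fact that for $\mu\in\mathcal{M}$ the weight $p_{i,j}=q_i/n_i$ depends only on the row. Writing this out gives
\[
\mu(C_N(x)) = \prod_{k=1}^{\lceil \sigma N\rceil} \frac{q_{i_k(x)}}{n_{i_k(x)}} \cdot \prod_{k=\lceil \sigma N\rceil+1}^{N} q_{i_k(x)},
\]
so the whole quantity is determined by the counts of zeros and ones among the first $N$ row indices $i_1(x),\ldots,i_N(x)$.

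First I would split the expansion at $k=\lceil\sigma N\rceil$ and let $A_N$ be the number of row-$0$ symbols among positions $1,\ldots,\lceil\sigma N\rceil$ and $B_N$ be the number of row-$0$ symbols among positions $\lceil\sigma N\rceil+1,\ldots,N$. Taking $\log$ and dividing by $-N\log m$, the symbolic local dimension at $x$ is the limit of
\[
-\frac{1}{N\log m}\Bigl[A_N\log\tfrac{q_0}{n_0}+(\lceil\sigma N\rceil-A_N)\log\tfrac{q_1}{n_1}+B_N\log q_0+(N-\lceil\sigma N\rceil-B_N)\log q_1\Bigr].
\]
The hypothesis $\beta(x)=\beta$ says that $G_k(x)\to\beta$, i.e.\ the asymptotic frequency of the row-$0$ symbol exists along the orbit. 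This forces $A_N/N\to\sigma\beta$ (the prefix of length $\lceil\sigma N\rceil$ shares the limiting frequency) and $B_N/N\to(1-\sigma)\beta$ (by subtraction), together with the complementary statements for row $1$.

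Finally I would substitute these four limits, absorb the $O(1)$ contribution from the ceilings, and regroup according to the definitions. The two terms carrying the $n_i$ factors, combined with their $\sigma$-prefactor, collapse to $\sigma\, CH_q(\beta)$, while the two pure $q_i$-terms, combined with the $(1-\sigma)$-prefactor, give $(1-\sigma)\, H_q(\beta)$. This yields exactly
\[
\delta_\mu(x)=\frac{1}{\log m}\bigl(\sigma\, CH_q(\beta)+(1-\sigma)\,H_q(\beta)\bigr)=\alpha(\beta).
\]
There is essentially no substantive obstacle; the only point requiring a moment's thought is that convergence of the overall frequency $G_k(x)$ transfers to both of the blocks $[1,\lceil\sigma N\rceil]$ and $[\lceil\sigma N\rceil+1,N]$ separately, which is immediate because $G_k$ converges along every subsequence of indices tending to infinity.
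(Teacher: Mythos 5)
Your proposal is correct: it is exactly the ``standard calculation'' the paper invokes without writing out, namely expanding $\mu(C_N(x))$ via \eqref{mucn} with $p_{i,j}=q_i/n_i$, counting row-$0$ occurrences in the two blocks $[1,\lceil\sigma N\rceil]$ and $(\lceil\sigma N\rceil,N]$, and passing to the limit using $G_k(x)\to\beta$ (the transfer of the frequency to both blocks, the one subtlety, is handled correctly). The resulting expression collapses to $\frac{1}{\log m}\bigl(\sigma\,CH_q(\beta)+(1-\sigma)H_q(\beta)\bigr)$ once one reads $CH_q$ and $H_q$ with their evidently intended definitions, so nothing is missing.
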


Our approach is to relate the local dimension at a point $x$ to $\beta(x)$ and use the following lemma.

\begin{lem}[Gui-Li]\label{freqdim} For all $\beta\in [0,1]$ we have that
$$\dim_P\{x:\beta(x)=\beta\}=\dim_H\{x:\beta(x)=\beta\}= \frac 1 {\log m} \cdot \left(\sigma H_q(\beta) + (1-\sigma) (H_q(\beta))\right).$$
\end{lem}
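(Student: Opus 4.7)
The strategy is the standard one for frequency spectra on Bedford--McMullen carpets: build the natural Bernoulli measure that gives the level set full mass and compute both its symbolic local dimension and the number of approximate squares it meets. Concretely, I define the Bernoulli measure $\nu_\beta$ on $\Lambda$ by assigning weight $\beta/n_0$ to each level-one cylinder in row $0$ and $(1-\beta)/n_1$ to each level-one cylinder in row $1$. Since the sequence $(i_k)$ becomes i.i.d.\ Bernoulli($\beta$) under $\nu_\beta$, the strong law of large numbers gives $\beta(x)=\beta$ for $\nu_\beta$-a.e.\ $x$, so $\nu_\beta$ is supported on the level set up to a null set.

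For the lower bound, I would apply formula \eqref{mucn} to $\nu_\beta$: the first $\lceil\sigma N\rceil$ factors contribute the entropy $-\sigma N\sum p'_{i,j}\log p'_{i,j}$ while the remaining $i$-coordinates contribute $-(1-\sigma)N\sum q'_i\log q'_i$, with $q'_0=\beta$, $q'_1=1-\beta$. Dividing by $-N\log m$ for a $\nu_\beta$-typical $x$ yields exactly the value $s(\beta)$ appearing on the right-hand side of the lemma. Because $\nu_\beta$-almost every point has $I_N(x),J_N(x)\to 0$ (Borel--Cantelli, using that the row coordinates are genuinely random), inequalities \eqref{above}--\eqref{below} promote this from the symbolic to the real local dimension, so $d_\mu(x)=s(\beta)$ $\nu_\beta$-a.e. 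The mass distribution principle then gives $\dim_H\{x:\beta(x)=\beta\}\geq s(\beta)$, and the packing Frostman lemma (the second lemma of Section~2, applied in the limsup direction, which is automatic from the limit) gives the same lower bound for $\dim_P$.

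For the upper bound I would cover $\{x:\beta(x)=\beta\}$ by approximate squares of level $N$. The key point is that any point $x$ with $\beta(x)=\beta$ must, for all sufficiently large $N$, satisfy $G_{\lceil\sigma N\rceil}(x),G_N(x)\in(\beta-\epsilon,\beta+\epsilon)$; thus only approximate squares whose symbolic coding has row-$0$ frequencies in these windows can meet the set. A Stirling estimate bounds the number of such squares by roughly $\binom{\lceil\sigma N\rceil}{\beta\lceil\sigma N\rceil}n_0^{\beta\sigma N}n_1^{(1-\beta)\sigma N}\binom{N-\lceil\sigma N\rceil}{\beta(N-\lceil\sigma N\rceil)}$, whose logarithm divided by $-N\log m$ matches $s(\beta)$ as $\epsilon\downarrow 0$. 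Since approximate squares at level $N$ have diameter comparable to $m^{-N}$, this yields $\overline{\dim}_B\bigl\{x:\beta(x)\in(\beta-\epsilon,\beta+\epsilon)\bigr\}\leq s(\beta)+o(1)$, and hence $\dim_P\leq s(\beta)$ by the first lemma of Section~2. Letting $\epsilon\to 0$ through a countable sequence closes the argument.

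The main obstacle, and the only place where one must be careful, is the passage from symbolic to geometric local dimension in the lower bound: points where $I_N$ or $J_N$ is not small can have $\mu$-measure of a ball much smaller than the $\mu$-measure of the approximate square, so one must check that the bad set of such points has both $\nu_\beta$-measure zero and small dimension. Fortunately $\nu_\beta$ charges no such configuration (a standard Borel--Cantelli argument: the probability of a run of length $\eta N$ in the last coordinates decays exponentially), and the estimates of Lemma~\ref{box}-type confirm that these bad points do not inflate the dimension on the upper-bound side either. The counting step itself is routine entropy bookkeeping once the approximate-square structure \eqref{mucn} is in place.
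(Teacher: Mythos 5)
Your argument is correct in substance, but it takes a genuinely different route from the paper: the paper disposes of this lemma by citation, invoking Theorem~1.1 of \cite{GL} for $\beta\in(0,1)$ and treating $\beta\in\{0,1\}$ as an easy exercise, whereas you reprove the result from scratch via the natural Bernoulli measure $\nu_\beta$ ($\tilde p_{0j}=\beta/n_0$, $\tilde p_{1j}=(1-\beta)/n_1$), computing its symbolic local dimension from \eqref{mucn}, upgrading to genuine balls via \eqref{above}--\eqref{below} and a Borel--Cantelli bound on $I_N,J_N$, and matching this with a frequency-constrained count of approximate squares for the upper bound. What your route buys is that it is self-contained and yields the packing statement directly: the cited Gui--Li theorem is a Hausdorff-dimension result, so the packing upper bound really does rest on exactly the box-counting argument you sketch (and your value $s(\beta)=\frac1{\log m}\bigl(H(\beta)+\sigma(\beta\log n_0+(1-\beta)\log n_1)\bigr)$ is the intended right-hand side; the formula as printed in the lemma, $\sigma H_q(\beta)+(1-\sigma)H_q(\beta)$, is a typo). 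What the paper buys is brevity. Two points in your write-up need tightening. First, the lower bound uses the local dimension of $\nu_\beta$, not of $\mu$ (your ``$d_\mu(x)=s(\beta)$'' should read $d_{\nu_\beta}(x)=s(\beta)$); as written it is only a slip, since your computation is indeed for $\nu_\beta$. Second, the statement $\overline{\dim}_B\{x:\beta(x)\in(\beta-\epsilon,\beta+\epsilon)\}\leq s(\beta)+o(1)$ is false as it stands: that set is typically dense in $\Lambda$, so its upper box dimension is $\overline{\dim}_B\Lambda$. You must first decompose into the sets $\{x: G_N(x)\in(\beta-\epsilon,\beta+\epsilon)\ \forall N\geq N_0\}$ (which you implicitly introduce with ``for all sufficiently large $N$''), bound the upper box dimension of each such piece by your counting estimate, and then use the countable stability of packing dimension together with $\dim_P\leq\overline{\dim}_B$ on each piece --- precisely the $X_{\alpha,N,\epsilon}$-style argument the paper uses elsewhere. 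With these repairs your proof is complete.
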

\begin{proof}
For $\beta\in (0,1)$ this follows from Theorem 1.1 in \cite{GL}. For the case when $\beta=0$ or $1$ it is a simple exercise.
\end{proof} 
Under certain special cases it is straightforward to show that $\{x:\beta(x)=\beta\}=X^{\rm{symb}}_{\alpha_{\beta}}$.
\begin{lem}
\begin{enumerate}
\item
If $n_0=n_1$ and $q_0\neq q_1$ then  $\{x; \beta(x) = \beta\} = X^{\rm{symb}}_{\alpha(\beta)}$.
\item
If $q_0=q_1$ and $n_0\neq n_1$ then  $\{x; \beta(x) = \beta\} = X^{\rm{symb}}_{\alpha(\beta)}$.
\item
If $q_0=q_1$ and $n_0=n_1$ then $\alpha_m=\alpha_M$ and $X_{\alpha_m}^{\rm{symb}}=\Lambda$
\end{enumerate}
\end{lem}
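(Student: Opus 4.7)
The plan is to unfold the formula \eqref{mucn} under the defining property $p_{i,j}=q_i/n_i$ of $\mathcal{M}$, and then track how the resulting expression depends on the digit frequencies in each case. Writing $A_N$ for the frequency of the symbol $0$ among $i_1(x),\ldots,i_{\lceil\sigma N\rceil}(x)$ and $B_N$ for its frequency among $i_{\lceil\sigma N\rceil+1}(x),\ldots,i_N(x)$, formula \eqref{mucn} gives
\[
-\frac{\log \mu(C_N(x))}{N\log m} = \sigma\bigl(A_N a_0 + (1-A_N)a_1\bigr) + (1-\sigma)\bigl(B_N b_0 + (1-B_N) b_1\bigr) + o(1),
\]
where $a_i = -\log(q_i/n_i)/\log m$ and $b_i=-\log q_i/\log m$. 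Thus $\delta_\mu(x)$ exists iff the right-hand side converges, and both halves are linear in the relevant frequency.

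Case (3) is immediate: when $q_0=q_1$ and $n_0=n_1$ all $p_{i,j}$ coincide, $\mu(C_N(x))$ depends only on $N$, and $\delta_\mu$ is constant on $\Lambda$, giving $\alpha_m=\alpha_M$ with $X_{\alpha_m}^{\rm symb}=\Lambda$. In case (1), where $n_0=n_1=n'$, we have $a_i = b_i + \log n'/\log m$, so the displayed expression collapses to an affine function of the single variable $G_N=\sigma A_N+(1-\sigma)B_N$, the frequency of $0$ in the first $N$ symbols, with slope $b_0-b_1=\log(q_1/q_0)/\log m\neq 0$. Hence $\delta_\mu(x)$ exists iff $G_N$ converges, i.e.\ iff $\beta(x)$ exists, and Lemma \ref{lem:calc0} identifies the common limit as $\alpha(\beta)$; this yields the set equality in case (1).

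Case (2) is the only one with a genuine subtlety. When $q_0=q_1$ the $B_N$-term is constant in $B_N$, so the displayed expression depends only on $A_N$, with slope $a_0-a_1 = \log(n_0/n_1)/\log m\neq 0$. Thus $\delta_\mu(x)$ exists iff $A_N$ converges, and the limit equals $\alpha(A^*)$ where $A^*=\lim A_N$. The remaining issue — the main technical obstacle — is that $A_N=G_{\lceil\sigma N\rceil}$ is a priori only a subsequence of $(G_k)_k$, so to conclude $\beta(x)=A^*$ one must transfer convergence back to the full sequence. This follows from the elementary observation that, since $\sigma\in(0,1)$, the map $N\mapsto\lceil\sigma N\rceil$ is nondecreasing with jumps of size $0$ or $1$ and attains every sufficiently large positive integer, because the interval $((k-1)/\sigma,k/\sigma]$ has length greater than $1$ and so contains an integer. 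Hence every accumulation point of $(G_k)_k$ is attained by $(A_N)_N$, so $A_N\to A^*$ forces $G_k\to A^*$, giving $\beta(x)=A^*$ and $\delta_\mu(x)=\alpha(\beta)$, which completes the proof.
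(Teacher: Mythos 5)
Your proposal is correct and takes essentially the same route as the paper: expand \eqref{mucn} under the row-equidistribution $p_{i,j}=q_i/n_i$ so that the symbolic local dimension becomes an affine function with nonzero slope of the row-$0$ frequency $G_N$ in case (1), of the frequency over the first $\lceil\sigma N\rceil$ symbols in case (2), and a constant in case (3). The only difference is that you explicitly justify transferring convergence of $G_{\lceil\sigma N\rceil}$ back to the full sequence $(G_k)$ in case (2), a point the paper treats as obvious.
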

\begin{proof}
For the first part fix $x\in\Lambda$ and $k\in\N$, we can calculate
$$-\log\mu(C_k(x))= kH_q(G_k(x))+\sigma_k\log n_0+\mathit{o}(k)$$
and we can see that $d_{\mu}^{\text{symb}}=\alpha(\beta)$ if and only if $\lim_{k\to\infty}G_k(x)=\beta$.
For the second part we have that if $q_0=q_1=1/2$  
$$-\log(\mu(C_k(x)))= k\log 2+k\sigma ( G_{\sigma_k}\log n_0+  (1-G_{\sigma k})\log n_1)+\mathit{o}(k).$$
Again we clearly have that $d_{\mu}^{\text{symb}}(x)=\alpha(\beta)$ if and only if $\lim_{k\to\infty}G_k(x)=\beta$. 

Finally for the 3rd part if $n_0=n+1$ then
$$-\log(\mu(C_k(x))= k\log 2+k\sigma\log n_0+\mathit{o}(k)$$
and 
$$d_{\mu}^{\text{symb}}(x)=\frac{\log 2+\sigma\log n_0}{\log m}=\dim\Lambda=\alpha_m=\alpha_M.$$
\end{proof}
Thus in the rest of the proof we assume that $n_0\neq n_1$ and $q_0\neq q_1$ and define
$$A=\frac{\log(q_0/q_1)}{\sigma\log(n_0/n_1)}.$$
In the case that $|A|\neq 1$ we will again be able to show that $x\in X_{\alpha}^{\rm{symb}}$ uniquely determines $\beta(x)$.
If $A=1$ then the set $X_\alpha^{\rm {symb}}$ contains not only points with a fixed (nonunique) $\beta(x)$ but also some additional points for which $\beta(x)$ does not exist. However, we will prove that this does not lead to an increase of either Hausdorff or packing dimension.  Finally, if $A=-1$ then the set $X_\alpha^{\rm {symb}}$ also contains some additional points with no $\beta(x)$. In this case, which is covered in part 2 of Theorem \ref{thm:2}, this leads to an increase of the packing dimension, though not the Hausdorff dimension. 

\begin{lem}
If $n_0\neq n_1$ and $q_0 \neq q_1$ then
\begin{equation} \label{eps0}
P_{k+1} =  F(P_k) + O(\varepsilon) + O(\sigma^k/M),
\end{equation}
where
\[
 F(P_k) = A^{-1} P_k + B
\]
and

\[
B=-\frac 1 {\log (q_0/q_1)} (\alpha \log m + \sigma \log p_1 +
(1-\sigma) \log q_1).
\]
\end{lem}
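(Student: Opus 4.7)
The approach is to use the formula \eqref{mucn} for $\mu(C_N(x))$ together with the hypothesis $x\in \bigcap_{N>N_0} X_{\alpha, N, \varepsilon}$, evaluated at the levels $N_k := \lceil M\sigma^{1-k}\rceil$ (so that $\lceil \sigma N_k\rceil = \lceil M\sigma^{2-k}\rceil$), to obtain an affine relation between $P_k$ and $P_k'$. The desired recursion then follows by combining with the convex-combination identity $P_{k+1} = \sigma P_k + (1-\sigma) P_k'$.

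For $\mu\in\mathcal M$ we have $p_{i,j} = q_i/n_i$ for every $(i,j)\in D$, so $\mu(C_{N_k}(x))$ depends only on the row indices. Using the definitions of $P_k$ (the $0$-row frequency in positions $1,\ldots,\lceil\sigma N_k\rceil$) and $P_k'$ (the $0$-row frequency in positions $\lceil\sigma N_k\rceil+1,\ldots,N_k$), formula \eqref{mucn} gives
\[
-\log\mu(C_{N_k}(x)) = -\lceil \sigma N_k\rceil\bigl(P_k\log p_0 + (1-P_k)\log p_1\bigr) - (N_k - \lceil\sigma N_k\rceil)\bigl(P_k'\log q_0 + (1-P_k')\log q_1\bigr).
\]
Dividing by $N_k\log m$, using the hypothesis that the left-hand side lies in $(\alpha-\varepsilon,\alpha+\varepsilon)$, and absorbing ceiling-function discrepancies into an $O(1/N_k) = O(\sigma^k/M)$ error, I get
\[
\alpha \log m = -\sigma\bigl(P_k\log p_0 + (1-P_k)\log p_1\bigr) - (1-\sigma)\bigl(P_k'\log q_0 + (1-P_k')\log q_1\bigr) + O(\varepsilon) + O(\sigma^k/M).
\]

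I then solve this linear equation for $(1-\sigma)P_k'$. Writing $L_q := \log(q_0/q_1)$ and $L_n := \log(n_0/n_1)$, and using the structural relation $\log(p_0/p_1) = L_q - L_n$, which is the only place where $\mu\in\mathcal M$ enters, this yields
\[
(1-\sigma) L_q\, P_k' = -\alpha\log m - \sigma P_k(L_q - L_n) - \sigma\log p_1 - (1-\sigma)\log q_1 + O(\varepsilon) + O(\sigma^k/M).
\]
The identity $P_{k+1} = \sigma P_k + (1-\sigma)P_k'$, immediate from partitioning the initial block of length $N_k$ into its first $\lceil\sigma N_k\rceil$ symbols and the remaining block of length $N_k-\lceil\sigma N_k\rceil\approx (1-\sigma)N_k$, lets me eliminate $(1-\sigma)P_k'$. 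Substituting and dividing through by $L_q$, which is nonzero because $q_0\neq q_1$, the $\sigma P_k L_q$ terms cancel, leaving
\[
P_{k+1} = \frac{\sigma L_n}{L_q} P_k - \frac{1}{L_q}\bigl(\alpha\log m + \sigma\log p_1 + (1-\sigma)\log q_1\bigr) + O(\varepsilon) + O(\sigma^k/M).
\]
By the definition of $A$, the coefficient of $P_k$ is exactly $A^{-1}$ and the constant term is precisely the stated $B$.

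The only delicate point is the bookkeeping of ceiling-function errors: each $\lceil\cdot\rceil$ introduces an $O(1)$ additive error in an exponent of \eqref{mucn}, which contributes $O(1/N_k) = O(\sigma^k/M)$ once one normalizes by $N_k\log m$. Everything else is elementary linear algebra, made possible by the special structure $p_{i,j} = q_i/n_i$ that defines $\mathcal M$.
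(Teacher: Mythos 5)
Your proof is correct and follows essentially the same route as the paper: evaluate \eqref{mucn} at level $N_k\approx M\sigma^{1-k}$, use the pinching from $x\in X_{\alpha,N,\epsilon}$ to relate $P_k$ and $P_k'$, and eliminate $P_k'$ via $P_{k+1}=\sigma P_k+(1-\sigma)P_k'+O(\sigma^k/M)$, with $\mu\in\mathcal{M}$ entering only through $\log(p_0/p_1)=\log(q_0/q_1)-\log(n_0/n_1)$. Your explicit algebra in fact confirms the coefficient $A^{-1}=\sigma\log(n_0/n_1)/\log(q_0/q_1)$ claimed in the lemma, whereas the paper's intermediate display \eqref{p1p2} appears to drop the factor $\sigma$ in front of $\log(n_0/n_1)\,P_k$ (a typo), so your bookkeeping is, if anything, more careful.
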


\begin{proof}
Given
$x\in \Lambda$ and $M\in\N$, the measure $\mu(C_{\sigma^{1-k}M}(x))$ is
precisely determined by $\{i_k(x)\}_{k=0}^{\lfloor\sigma^{1-k}M\rfloor-1}$.

We have

\begin{equation} \label{p1p1}
-\frac 1 {\sigma^{1-k}M} \log \mu(C_{\sigma^{1-k}M}(x)) = \sigma (CH_q(P_k)) + (1-\sigma) (H_q(P_k') )+ o(1).
\end{equation}

For $x\in X_{\alpha, M\sigma^{1-k}, \epsilon}$ we have

\[
\frac 1 {\sigma^{1-k}M} \log \mu(C_{\sigma^{1-k}M}(x)) \in [-(\alpha+\epsilon) \log m,
-(\alpha-\epsilon)\log m],
\]
hence \eqref{p1p1} lets us obtain a relation between $P_k$ and
$P_k'$. Applying the
obvious relation

\[
P_{k+1} = \sigma P_k + (1-\sigma) P_k' + O(\sigma^k/M)
\]
we get

\begin{equation} \label{p1p2}
\log \frac {q_0} {q_1} P_{k+1} = \log \frac {n_0} {n_1} P_k -\alpha
\log m - \sigma \log p_1 - (1-\sigma) \log q_1 + O(\epsilon) + O(\sigma^k/M)
\end{equation}
and the assertion follows.
\end{proof}


We will denote by $P$ the fixed point of $F$ and by $\mu_P$ the
Bernoulli measure given by the probability vector
$\tilde{p}_{0j}=P/n_0, \tilde{p}_{1j} = (1-P)/n_1$. We note that
$\mu_P$ is the Bernoulli measure constructed by King in \cite{K}
and that

\[
\dim_H X_\alpha = \dim_H X_\alpha^{\rm symb} = \dim_H \mu_P.
\]

The map $F$ is very simple, we only need to consider several cases
depending on the value of $A$. Note that $A$ does only depend on
$\mu$ and $\alpha$, not on $M$, $N$ or $\epsilon$. Hence, we can
make our choice of $\epsilon$ only at this moment (this will
matter in the proof of the following lemma).
\subsection{Proof of part 1 of Theorem \ref{thm:2}}
Part 1 of Theorem \ref{thm:2} corresponds to the case when $A\neq -1$. 
We will start with the simple case $|A|\neq 1$.

\begin{lem} \label{lem:an1}
If $|A| \neq 1$ then
\[
X_\alpha^{\rm {symb}} = \{x; \beta(x)=P\}.
\]
\end{lem}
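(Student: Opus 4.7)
The plan is to prove both inclusions in turn, with the hypothesis $|A|\neq 1$ entering only in the harder direction, via the dichotomy that $F$ is either a contraction or an expansion; in either case the recurrence from the preceding lemma forces $P_k\to P$.

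The inclusion $\{x:\beta(x)=P\}\subseteq X_\alpha^{\rm symb}$ is immediate from Lemma~\ref{lem:calc0}: the fixed-point equation $P=A^{-1}P+B$, combined with the definitions of $A$ and $B$, rearranges to $\alpha(P)=\alpha$, so any $x$ with $\beta(x)=P$ satisfies $d_\mu^{\rm symb}(x)=\alpha(P)=\alpha$.

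For the reverse inclusion, fix $x\in X_\alpha^{\rm symb}$. By \eqref{epsdef}, for every $\epsilon>0$ there exists $N_0=N_0(\epsilon)$ with $x\in X_{\alpha,N,\epsilon}$ for all $N>N_0$. Choose $M>N_0$ and invoke the recurrence
\[
P_{k+1}-P = A^{-1}(P_k-P)+O(\epsilon)+O(\sigma^k/M),
\]
valid for every $k\geq 0$ with $M\sigma^{1-k}>N_0$. If $|A|>1$, then $F$ is a contraction with factor $|A^{-1}|<1$, so iterating forward from any $P_0\in[0,1]$ confines $P_k$ to a neighborhood of $P$ of radius $O(\epsilon)$; letting $\epsilon\to 0$ (with correspondingly larger $N_0$ and $M$) gives $\lim_{k\to\infty} P_k = P$, i.e.\ $\beta(x)=P$. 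If $|A|<1$, then $F$ is expanding and I would argue by contradiction: any $k$ with $|P_k-P|>C\epsilon$ (for $C$ chosen so the errors are absorbed) forces $|P_{k+1}-P|\geq\lambda|P_k-P|$ for a fixed $\lambda>1$; iterating yields geometric growth that exceeds $1$ within $O(\log(1/\epsilon))$ steps, which is impossible since every $P_j$ lies in $[0,1]$. Hence $|P_k-P|=O(\epsilon)$ for all large $k$, and once more $\beta(x)=P$.

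The main obstacle is the bookkeeping in the expanding case: one must select $\epsilon$ first---small enough relative to the spectral gap $|A^{-1}|-1$ that a single step of the recurrence genuinely strengthens the inequality by a definite factor $\lambda>1$---then fix $N_0(\epsilon)$, and finally take $M$ large enough both to keep the error term $O(\sigma^k/M)$ negligible and to ensure the recurrence remains valid over the $O(\log(1/\epsilon))$ iterations required to blow past $1$. This ordering is possible because $\lambda$ depends only on $A$ while the admissible range of $k$ can be made arbitrarily long by increasing $M$.
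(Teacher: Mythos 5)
Your proposal is correct and takes essentially the same route as the paper: the inclusion $\{x;\beta(x)=P\}\subseteq X_\alpha^{\rm symb}$ is read off from Lemma \ref{lem:calc0}, and the converse is obtained from the recurrence $P_{k+1}=F(P_k)+O(\epsilon)+O(\sigma^k/M)$ by the same dichotomy—contraction of $F$ when $|A|>1$, and when $|A|<1$ expansion away from $P$ forcing the frequencies outside $[0,1]$, a contradiction—followed by letting $\epsilon\to0$. The paper's proof is just a terser version of this argument (your quantifier bookkeeping in the expanding case is the only added detail).
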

\begin{proof}


It follows from Lemma \ref{lem:calc0} that $X_\alpha^{\rm {symb}}\supseteq\{x; \beta(x)=P\}$. To obtain the other direction, we will consider two cases.

Case I: $|A|>1$. In this situation we choose $\epsilon>0$ to be small relative to $|A^{-1}|$ and note that the map $F$ is contracting to the fixed point $P$.
This also means that the real frequencies $P_k$ will converge to
the region $[P-c\epsilon, P+c\epsilon]$ and then stay there. As $\epsilon$ can be chosen arbitrarily small, the assertion follows.

Case II: $|A|<1$. In this situation we choose a small $\epsilon$
and the map $P_k \to P_{k+1}$ is diverging (except in some $c\epsilon$-neighbourhood of $P$, where the error term can dominate the divergence of $F$). If $P_k$ does not belong to 
$[P-c\epsilon, P+c\epsilon]$ then $P_{k+1}$ will be even further from
$P$ and so on. However, for any point $x$ all the frequencies $P_k$
must belong to $[0,1]$, hence they must indeed be in some
$c\epsilon$-neighborhood of $P$ for all $k>0$. Like in the first
case, $\epsilon$ can be chosen arbitrarily small and the assertion follows.
\end{proof}
We now have that by Lemma \ref{freqdim} and Lemma \ref{lem:eqsymb}
$$\dim_H X_{\alpha}\leq\dim_P X_{\alpha}\leq \dim_P X_{\alpha}^{\text{symb}}=\dim_H X_{\alpha}^{\text{symb}}.$$
The assertion of the Theorem when $|A|\neq 1$ now follows since by \cite{JR} we know $\dim_H X_{\alpha}=\dim_H X_{\alpha}^{\text{symb}}.$

We are left with the case when $A= 1$. In this case and in the case in the following subsection, when $A=-1$, the symbolic local dimension of $x$ does not determine $\beta(x)$. However when $A=1$ we have the following statement:

\begin{lem} \label{lem:a1}
If $A=1$ then $\alpha_m=\alpha_M$ and
\begin{equation}\label{degenerate}
\dim_P X_{\alpha_m}^{\rm{symb}}=\dim_H X_{\alpha_m}^{\rm{symb}}=\dim_H \Lambda.
\end{equation}
\end{lem}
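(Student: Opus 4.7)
First I verify $\alpha_m=\alpha_M$ algebraically. Since $p_{ij}=q_i/n_i$ for $(i,j)$ in row $i$, the quantity $(-\sigma\log p_{ij}+(\sigma-1)\log q_i)/\log m$ depends only on the row and simplifies to $(\sigma\log n_i-\log q_i)/\log m$; equating the two row values yields precisely $\log(q_0/q_1)=\sigma\log(n_0/n_1)$, which is $A=1$, and forces $q_i=n_i^\sigma/(n_0^\sigma+n_1^\sigma)$. Substituting, the common value is $\log_m(n_0^\sigma+n_1^\sigma)=\dim_H\Lambda$ by McMullen's formula. Note $q_0=q_1$ is excluded here: under $A=1$ it would imply $n_0=n_1$, a case already handled above, so we may assume $q_0\neq q_1$. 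To identify $X_{\alpha_m}^{\rm symb}$ I write $a_N(x)$ for the row-0 frequency among $i_1(x),\ldots,i_{\lceil\sigma N\rceil}(x)$ and $b_N(x)$ for the row-0 frequency among $i_{\lceil\sigma N\rceil+1}(x),\ldots,i_N(x)$. Expanding \eqref{mucn} with $p_i=q_i/n_i$ and using $A=1$, a short calculation gives
\[
\frac{-\log\mu(C_N(x))}{N\log m}=\alpha_m+\frac{(1-\sigma)\log(q_0/q_1)}{\log m}\bigl(a_N(x)-b_N(x)\bigr)+o(1).
\]
Since $q_0\neq q_1$, this shows $X_{\alpha_m}^{\rm symb}=Y:=\{x\in\Lambda:a_N(x)-b_N(x)\to 0\}$.

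\textbf{Lower bound.} Consider the Bernoulli measure $\mu_P$ on $\Lambda$ with weight $q_i/n_i$ on each rectangle of row $i$, for the values $q_i=n_i^\sigma/(n_0^\sigma+n_1^\sigma)$ dictated by $A=1$. This is precisely McMullen's measure of maximal Hausdorff dimension, so $\dim_H\mu_P=\dim_H\Lambda$. By the strong law of large numbers, $\mu_P$-a.e.\ $x$ has $a_N(x),b_N(x)\to q_0$, hence $\mu_P(Y)=1$ and therefore $\dim_H X_{\alpha_m}^{\rm symb}=\dim_H Y\geq\dim_H\Lambda$.

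\textbf{Upper bound -- the main obstacle.} For $\varepsilon>0$ and $N_0\in\N$ let $Y_\varepsilon^{N_0}=\{x:|a_N(x)-b_N(x)|<\varepsilon\text{ for all }N>N_0\}$. A Stirling estimate shows the number of level-$N$ approximate squares on which $(a_N,b_N)\approx(a,b)$ is of order $\exp(N\Phi(a,b)+o(N))$, where
\[
\Phi(a,b)=\sigma H(a)+\sigma\bigl[a\log n_0+(1-a)\log n_1\bigr]+(1-\sigma)H(b).
\]
On the diagonal $a=b$, $\Phi(a,a)=H(a)+\sigma[a\log n_0+(1-a)\log n_1]$, which is strictly concave in $a$ and maximized at the unique point $a=q_0=n_0^\sigma/(n_0^\sigma+n_1^\sigma)$ with value $\log(n_0^\sigma+n_1^\sigma)=(\log m)\dim_H\Lambda$. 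By continuity, $\sup_{|a-b|<\varepsilon}\Phi(a,b)\leq(\log m)\dim_H\Lambda+C\varepsilon$, and since each approximate square has diameter $\asymp m^{-N}$, this yields $\overline{\dim}_B Y_\varepsilon^{N_0}\leq\dim_H\Lambda+C'\varepsilon$. Because $Y\subseteq\bigcup_{N_0}Y_\varepsilon^{N_0}$ and packing dimension is countably stable and bounded by upper box dimension, $\dim_P Y\leq\sup_{N_0}\overline{\dim}_B Y_\varepsilon^{N_0}\leq\dim_H\Lambda+C'\varepsilon$. Letting $\varepsilon\to 0$ gives $\dim_P X_{\alpha_m}^{\rm symb}\leq\dim_H\Lambda$, completing the proof. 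The delicate point here is that the constraint $|a_N-b_N|<\varepsilon$ forces optimization onto a one-dimensional slice of the $(a,b)$-square on which the exponent happens to coincide with McMullen's dimension formula; without this coincidence one would only recover the strictly larger bound $\dim_P\Lambda$.
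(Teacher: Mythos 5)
Your proof is correct and takes essentially the same route as the paper: both arguments reduce membership in $X_{\alpha_m}^{\rm symb}$ to the two block frequencies being asymptotically equal (the paper phrases this as the drift estimate $|P_{k+1}-P_k|\leq O(\epsilon)$ for $A=1$), then box-count the approximate squares compatible with a near-diagonal frequency pair, observe that the diagonal supremum $\sup_a\bigl[H(a)+\sigma(a\log n_0+(1-a)\log n_1)\bigr]=\log m\cdot\dim_H\Lambda$, and finish by countable stability of packing dimension together with the lower bound coming from $\mu$ itself being the measure of maximal dimension. The only differences are cosmetic (the paper quotes $\dim_H X_{\alpha_m}^{\rm symb}=\dim_H\Lambda$ instead of rederiving it via the strong law, and your modulus $C\epsilon$ should strictly be a modulus of continuity, which does not affect the limit $\epsilon\to 0$).
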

\begin{proof}
It is clear that $\alpha_m=\alpha_M$ and that $\mu$ is the measure of maximal dimension. We let $\alpha=\alpha_m$.
 We have for all $x\in
X_{\alpha, N, \epsilon}$
\[
|P_{k+1}-P_k|\leq O(\epsilon)+ O(\sigma^k/M).
\]

Hence, a drift is possible: for any $\tilde{M}>N$ we might not
know the frequency $Q=\sum_j F_0^{\tilde{M}\sigma^{-1}-1}(0,j)(x)$.
Still, the set $X_{\alpha, N, \epsilon}$ can be covered by

\[
\sup_{Q\in [0,1]} \exp(\tilde{M} CH(Q)) \exp((\sigma^{-1}-1)\tilde{M}H(Q))  \exp(\tilde{M} O(\epsilon))
\]
approximate squares of level $\tilde{M} \sigma^{-1}$ and hence

\[
\overline{\dim_B} X_{\alpha, N, \epsilon} \leq \sup_{Q\in [0,1]}
\frac 1 {\log m} \left(\sigma CH(Q)+(1-\sigma)H(Q)\right)
+O(\epsilon) = \dim_H \Lambda + O(\epsilon).
\]

As $\dim_H X_\alpha^{\rm symb} = \dim_H \Lambda$, \eqref{degenerate}
follows.
\end{proof}

The proof of part 1 of Theorem \ref{thm:2} can now be completed since by Lemma \ref{lem:eqsymb} we have
$$\dim_H X_{\alpha_m}\leq \dim_P X_{\alpha_m}=\dim_H X_{\alpha_m}=\dim_H\Lambda$$
and we know $\dim_H X_{\alpha_m}=\dim_H\Lambda$.

\subsection{Proof of part \ref{thm2p3} of Theorem \ref{thm:2}}

Finally, the most interesting case when $A=-1$ which corresponds to part \ref{thm2p3} of Theorem \ref{thm:2}. Our goal is to prove the following theorem from which part \ref{thm2p3} of Theorem \ref{thm:2} follows.
\begin{thm}\label{A=-1}
Assume $A=-1$. If $\alpha \in (\alpha_m, \alpha_M)$ then
\[
\dim_H X_\alpha^{\rm {symb}} < \dim_P X_\alpha^{\rm {symb}}=\dim_P X_\alpha.
\]
If $\alpha \in \{\alpha_m, \alpha_M\}$ then
\[
\dim_H X_\alpha^{\rm {symb}} = \dim_P X_\alpha^{\rm {symb}}\leq \dim_H X_{\alpha}.
\]
\end{thm}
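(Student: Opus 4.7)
The plan rests on the observation that when $A = -1$ the map $F(p) = -p + B$ is an involution, so the recursion $P_{k+1} = F(P_k) + O(\epsilon) + O(\sigma^k/M)$ forces any asymptotic solution to oscillate between some $p$ and $B-p$ rather than converge to the fixed point $P = B/2$. A direct expansion using the identity $\log(q_0/q_1) = -\sigma\log(n_0/n_1)$ shows that the quantity $\sigma CH_q(P_k) + (1-\sigma) H_q(P_k')$ governing the symbolic local dimension at scale $m^{-M\sigma^{-k}}$ is in fact constant in $p$ throughout the admissible interval $[p_{\min},p_{\max}]$ cut out by requiring $P_k,P_k' \in [0,1]$. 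Hence every oscillating symbolic sequence with $p$ in this interval produces a point of $X_\alpha^{\rm symb}$, which is precisely what makes $X_\alpha^{\rm symb}$ genuinely larger than $\{x : \beta(x) = P\}$ and drives the gap between Hausdorff and packing dimensions.

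\textbf{Boundary case.} For $\alpha \in \{\alpha_m,\alpha_M\}$ the map $\beta \mapsto \alpha(\beta)$ is affine and attains these extreme values only at $\beta \in \{0,1\}$; thus $[p_{\min},p_{\max}]$ degenerates to a single point, forcing $p = B-p = P \in \{0,1\}$ with no room for oscillation. Consequently $X_\alpha^{\rm symb} = \{x : \beta(x) = P\}$, and Lemma \ref{freqdim} gives $\dim_P X_\alpha^{\rm symb} = \dim_H X_\alpha^{\rm symb}$ at once. The remaining inequality $\dim_P X_\alpha^{\rm symb} \leq \dim_H X_\alpha$ then follows from the observation that $\mu_P$-typical points satisfy $I_N(x), J_N(x) \to 0$ by a standard Borel-Cantelli argument, so \eqref{above}--\eqref{below} identifies their real and symbolic local dimensions and places them in $X_\alpha$.

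\textbf{Interior case.} For $\alpha \in (\alpha_m,\alpha_M)$ the bound $\dim_H X_\alpha^{\rm symb} = \dim_H \mu_P$ is already known from \cite{JR}. For the strict packing lower bound I would fix a value $p \neq P$ in the interior of $[p_{\min},p_{\max}]$ and construct a probability measure $\nu_p$ supported on $X_\alpha$ by concatenating Bernoulli blocks: choose a very rapidly growing sequence $M_1 < M_2 < \ldots$ and, on the symbolic block between levels $M_k\sigma^{-1}$ and $M_{k+1}$, use the Bernoulli measure whose row-0 weight is $p$ or $B-p$, alternating with the parity of $k$. The $\nu_p$-typical $x$ then has $P_k$ alternating between $p$ and $B-p$, so by the involution identity $x \in X_\alpha^{\rm symb}$; the Borel-Cantelli control of $I_N, J_N$ combined with \eqref{above}--\eqref{below} upgrades this to $x \in X_\alpha$. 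A direct calculation evaluates $\limsup_{r \to 0} \log\nu_p(B(x,r))/\log r$ along the two alternating families of scales and yields a quantity $s(p)$ which can be made strictly larger than $\dim_H \mu_P$ by a suitable choice of $p$ in $(p_{\min},p_{\max})$. Applying the packing version of Frostman's lemma from Section~2 to $\nu_p$ then gives $\dim_P X_\alpha \geq s(p) > \dim_H X_\alpha^{\rm symb}$.

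\textbf{The main obstacle} is the matching upper bound $\dim_P X_\alpha^{\rm symb} \leq \sup_p s(p)$, needed to conclude the equality $\dim_P X_\alpha^{\rm symb} = \dim_P X_\alpha$. This calls for a covering argument in the spirit of Lemma \ref{box}: partition $\bigcap_{N>N_0} X_{\alpha,N,\epsilon}$ according to the one-parameter orbit $\{P_k\}$ of the involution and bound, at each scale $\tilde M$, the number of approximate squares of level $\tilde M$ meeting each stratum. The delicate point is that for $A = -1$ the recursion does not determine $P_k$ but only its $F$-orbit, so the optimisation runs over a genuine one-parameter family, and one must argue that the maximal exponential rate coincides with the $s(p^*)$ produced above. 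A secondary difficulty is the Frostman step itself: passing from approximate squares to Euclidean balls requires a case analysis along the lines of Lemma \ref{lem:eqsymb} to exclude the rare configurations causing large measure drops between adjacent approximate squares.
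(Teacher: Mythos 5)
You have identified the right mechanism --- when $A=-1$ the frequency recursion is an involution, so $X_\alpha^{\rm symb}$ contains points whose cumulative row-frequencies oscillate about $P$, and these extra points raise the packing but not the Hausdorff dimension; this is indeed how the paper proceeds. But the execution has genuine gaps, the most serious being your lower-bound measure $\nu_p$. With a rapidly growing sequence $M_k$ and a single Bernoulli row-weight $p\neq P$ on the whole block between $M_k\sigma^{-1}$ and $M_{k+1}$, a $\nu_p$-typical point has, at scales $N$ with $M_k\sigma^{-1}\ll \sigma N<N\ll M_{k+1}$, both the cumulative frequency up to level $\sigma N$ and the frequency on $[\sigma N,N]$ close to $p$. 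Since for $A=-1$ the function $\beta\mapsto\alpha(\beta)$ of Lemma \ref{lem:calc0} is not constant (its derivative is $2\sigma\log(n_0/n_1)/\log m\neq0$ because $n_0\neq n_1$), the symbolic local dimension at those intermediate scales is $\alpha(p)\neq\alpha$, so the typical point lies in neither $X_\alpha^{\rm symb}$ nor $X_\alpha$, and the Frostman step gives nothing. Membership in $X_\alpha^{\rm symb}$ forces $G_{\sigma N}(x)+G_N(x)=2P+o(1)$ at \emph{every} scale, which is why the paper's measure $\nu_\delta$ alternates the weights $P\pm\delta$ on \emph{consecutive} geometric blocks $[\sigma^{-K},\sigma^{-K+1})$; the cumulative frequencies then oscillate with the reduced amplitude $\frac{1-\sigma}{1+\sigma}\delta$, and the $\limsup$ of the local dimension of $\nu_\delta$ is extracted along the subsequence $\sigma^{-(2K+\gamma_0)}$ after optimizing over the phase $\gamma$.

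Three further points. First, the strict inequality $s(p)>\dim_H X_\alpha$ is asserted, not proved; in the paper this is Corollary \ref{cor:form}, and it genuinely needs an argument (the first-order variation in $\delta$ of $\tilde Y(\gamma,\delta)$ could a priori vanish at particular phases, and one must use the linear independence of $1$, $\sigma^\gamma$, $\sigma^{-\gamma}$ to rule out that it vanishes identically). Second, the upper bound $\dim_P X_\alpha^{\rm symb}\leq \sup_\delta Y(\delta)/\log m$, which you flag as the main obstacle, is where half of the theorem lives; the paper proves it by stratifying $X_\alpha^{\rm symb}$ according to the first level $N(x)$ at which the two-scale frequency constraint holds and bounding the box dimension of each stratum by an optimization over the admissible oscillation amplitude --- without this your claimed equality $\dim_P X_\alpha^{\rm symb}=\dim_P X_\alpha$ and the value of the spectrum are unproved. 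Third, in the boundary case your appeal to $\mu_P$-typical points with $I_N,J_N\to0$ fails: for $P\in\{0,1\}$ the measure $\mu_P$ is carried by a single row, so $I_N$ does not tend to $0$ and the inclusion \eqref{below} is unavailable; the paper instead gets the lower bounds for $\dim_H X_\alpha$ and $\dim_H X_\alpha^{\rm symb}$ from the explicit self-similar slice of $\Lambda$ with constant row symbol, whose dimension $\log n_i/\log n$ matches the covering upper bound obtained from \eqref{delta} with $\rho=0$.
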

The proof of this theorem will be split into several smaller parts. The map
$F$ has only one fixed point $P$, but $F^2 \equiv \text{id}$. Like
in the case when $A=1$,  drift is possible so we do not know the frequencies
$Q_1=G_{\tilde{M}}(x)$ or $Q_2=
G_{\tilde{M}\sigma^{-1}}(x)$, but we know that $Q_1+Q_2=2P
+ O(\epsilon)$. Once again one can calculate

\begin{equation} \label{delta}
\overline{\dim_B} X_{\alpha, N, \epsilon} \leq \sup_{\rho} \frac 1
{\log m} \left(\sigma CH(P+\rho) + (1-\sigma) H(P-\rho)\right)+O(\epsilon),
\end{equation}
where $\delta = \rho (1+\sigma)/(1-\sigma)$ and the supremum is
taken over $\rho$ such that $P\pm \delta \in [0,1]$.

The first thing to note is that if $P\in\{0,1\}$ (which corresponds to local dimensions 
$\alpha_{\rm min}$ and $\alpha_{\rm max}$) then 
$\rho=0$ is the only admissible choice. In these cases we have equality in \eqref{delta} and
we have the following simple result.
\begin{lem}\label{endpoints}
For $A=-1$ and $\alpha\in\{\alpha_m,\alpha_M\}$ we have that
$$\dimp X_{\alpha}^{\text{symb}}=\dim_H X_{\alpha}^{\text{symb}}\leq \dim_H X_{\alpha}.$$
\end{lem}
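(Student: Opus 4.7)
The plan is to combine the box-counting estimate (\ref{delta}), which collapses in the endpoint regime, with the Gui-Li formula from Lemma \ref{freqdim}. First, for $\alpha\in\{\alpha_m,\alpha_M\}$ the fixed point $P$ of $F$ necessarily lies in $\{0,1\}$, since the function $\beta\mapsto\alpha(\beta)$ of Lemma \ref{lem:calc0} attains its extrema on $[0,1]$ precisely at the endpoints. With $P\in\{0,1\}$ the admissibility constraint $P\pm\delta\in[0,1]$ in (\ref{delta}) forces $\rho=0$, so the supremum collapses to
\[
\overline{\dim}_B X_{\alpha,N,\epsilon} \leq \frac{1}{\log m}\bigl(\sigma CH(P) + (1-\sigma)H(P)\bigr) + O(\epsilon).
\]
By (\ref{epsdef}), $X_\alpha^{\rm symb}\subseteq \bigcup_{N_0} X_{\alpha,N_0,\epsilon}$, and countable stability of packing dimension together with $\dimp\leq\overline{\dim}_B$ and $\epsilon\to 0$ yield $\dimp X_\alpha^{\rm symb} \leq \dim_H\{x:\beta(x)=P\}$, the final equality coming from Lemma \ref{freqdim}.

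For the matching lower bound, the inclusion $\{x:\beta(x)=P\}\subseteq X_\alpha^{\rm symb}$ from Lemma \ref{lem:calc0} gives $\dim_H X_\alpha^{\rm symb}\geq \dim_H\{x:\beta(x)=P\}$. Sandwiching with the trivial $\dim_H\leq\dimp$ then forces the chain of equalities $\dimp X_\alpha^{\rm symb}=\dim_H X_\alpha^{\rm symb}=\dim_H\{x:\beta(x)=P\}$.

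To finish with $\dim_H X_\alpha^{\rm symb}\leq \dim_H X_\alpha$, I would introduce the Bernoulli measure $\mu_P$ with weights $\tilde p_{0j}=P/n_0$ and $\tilde p_{1j}=(1-P)/n_1$; for $P\in\{0,1\}$ this is supported uniformly on the single active row. By Lemma \ref{freqdim}, $\dim_H\mu_P=\dim_H\{x:\beta(x)=P\}$, so it suffices to verify that $\mu_P$-almost every $x$ lies in $X_\alpha$. The symbolic local dimension at such $x$ is $\alpha$ by direct computation from (\ref{mucn}); for the real local dimension, once $r$ is smaller than the gap between rows $0$ and $1$ the ball $B_r(x)$ only meets the fiber of the active row, so comparing $\mu(B_r(x))$ with the $\mu$-measures of concentric approximate squares along that fiber yields the same value $\alpha$. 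Hence $\dim_H X_\alpha\geq\dim_H\mu_P=\dim_H X_\alpha^{\rm symb}$. The main obstacle I anticipate is precisely this last comparison: the remark after (\ref{below}) does not directly apply because $i_k$ sits constantly in $\{0,m-1\}$ along $\mu_P$-typical orbits (automatically so when $m=2$), so this step must be handled by a direct geometric argument exploiting the one-dimensional slice structure of the active row rather than by invoking $I_N(x),J_N(x)\to 0$.
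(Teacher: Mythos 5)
Your overall strategy is the one the paper uses: at the endpoints the fixed point satisfies $P\in\{0,1\}$, the admissibility constraint in (\ref{delta}) forces $\rho=0$, and $\dimp X_\alpha^{\rm symb}\leq \overline{\dim}_B\leq \frac 1{\log m}\bigl(\sigma CH(P)+(1-\sigma)H(P)\bigr)$; the matching lower bounds for both $\dim_H X_\alpha^{\rm symb}$ and $\dim_H X_\alpha$ come from the single active row (the paper works with the edge fibers $\Lambda\cap\{(0,y)\}$, resp. $\Lambda\cap\{(1,y)\}$, of dimension $\log n_0/\log n$, resp. $\log n_1/\log n$; your route via $\{x:\beta(x)=P\}$, Lemma \ref{freqdim} and the measure $\mu_P$ is the same thing, since for $P\in\{0,1\}$ the measure $\mu_P$ is exactly the natural self-similar measure on that fiber).

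The gap is in the step you flag yourself: verifying that fiber points ($\mu_P$-a.e.\ points) actually lie in $X_\alpha$, i.e.\ that the \emph{real} local dimension of $\mu$ there is $\alpha$. The justification ``once $r$ is smaller than the gap between rows $0$ and $1$ the ball $B_r(x)$ only meets the fiber of the active row'' fails on two counts: the two occupied rows may be adjacent (e.g.\ $m=2$), so there is no gap at all; and even when there is a first-level gap, no fixed threshold on $r$ can work, because by self-affinity the inactive row reappears inside every level-$N$ cylinder of the active row at distance comparable to $m^{-N}$ from the fiber, so a ball of radius $\approx m^{-N}$ about a fiber point always meets carpet points outside the fiber (points sharing only the first $\sim N$ row symbols with $x$). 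What the argument needs is the scale-dependent separation: there is $c>0$ such that any $y\in\Lambda$ whose first $N$ row symbols are not all equal to the active row index satisfies $|y-x|\geq c\,m^{-N}$. This holds because a constant-row point sits at the fixed relative height $i_0/(m-1)$ inside each of its row cylinders, at definite relative distance from the other occupied sub-row and from the cylinder boundary (in the boundary cases $i_0\in\{0,m-1\}$ the point lies on the bottom or top edge of the unit square, beyond which there is nothing). Granted this, $B_{c\,m^{-N}}(x)$ is covered by a bounded number of level-$N$ approximate squares, all with the constant row prefix, and by the equidistribution of $\mu$ within rows each has $\mu$-measure either $0$ or equal to $\mu(C_N(x))$; hence $\mu(B_{m^{-N}}(x))$ is comparable to $\mu(C_N(x))$ up to uniform constants, so $d_\mu(x)=\delta_\mu(x)=\alpha$ on the whole fiber. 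With this replacement for the ``gap'' claim your proof closes and agrees with the paper's.
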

\begin{proof}
Let $\alpha\in\{\alpha_m,\alpha_M\}$ and thus $P\in\{0,1\}$.  Thus in inequality (\ref{delta}) the only choice of $\rho=0$ and so
$$\overline{\dim_B} X_{\alpha, N, \epsilon} \leq \sup_\rho \frac {\sigma CH(P)}{\log m} +O(\epsilon)$$
which means that
$$\dimp X_{\alpha^{\text{symb}}}\leq \frac {\sigma CH(P)}{\log m}.$$
To complete the proof we need to show that $\dim_H X_{\alpha^{\text{symb}}}\geq \frac {\sigma CH(P)}{\log m}$ and $\dim_H X_{\alpha}\geq \frac {\sigma CH(P)}{\log m}$.
To do this we observe that either $\Lambda\cap\{(0,y):y\in\R\}\subseteq X_{\alpha^{\text{symb}}}\cup X_{\alpha}$ or $\Lambda\cap\{(1,y):y\in\R\}\subseteq X_{\alpha^{\text{symb}}}\cup X_{\alpha}$. We can then easily calculate
$$\dim_H \Lambda\cap\{(0,y):y\in\R\}=\frac{\log n_0}{\log n}=\frac {\sigma CH(1)}{\log m}$$
and
$$\dim_H \Lambda\cap\{(1,y):y\in\R\}=\frac{\log n_1}{\log n}=\frac {\sigma CH(0)}{\log m}.$$
\end{proof}

However, in the interior of the spectrum the packing and Hausdorff
symbolic spectra are different.

\begin{defn}
Given $\delta, K_0>0$ let $W_{\alpha, \delta, K_0}$ be the set of points with following properties:
\begin{itemize}
\item for $K$ even for any $a,b \in [\sigma^{-K}, \sigma^{-K+1}-1]$ we have

\[
\left|  \sum_{j=0}^{n-1} F_a^{b-1}(0,j)(x) - P-\delta \right| < \frac {K_0} {b-a},
\]
\item for $K$ odd for any $a,b \in [\sigma^{-K}, \sigma^{-K+1}-1]$ we have

\[
\left| \sum_{j=0}^{n-1} F_a^{b-1}(0,j)(x) - P+\delta \right| < \frac {K_0} {b-a}.
\]
\end{itemize}
\end{defn}

If $x\in W_{\alpha, \delta, K_0}$ then whenever for some large $M$
\[
G_{M-1}(x) = P+\delta',
\]
there exists $\delta'' = \delta' + O(1/M)$ such that for all $K\in \N$

\[
G_{M\sigma^{-K}-1}(x) = P+(-1)^K \delta''+O(\sigma^K/M).
\]

Obviously, $|\delta''|\leq \delta/(1+\sigma)$. It follows that each of the
sets $W_{\alpha, \delta, K_0}$ is contained in $X_\alpha^{\rm
symb}$.

Our nearest goal is to calculate the upper box counting dimension of $W_{\alpha, \delta, K_0}$. We will need the following simple lemma.

\begin{lem} \label{lem:calc}
Let $M$ be large. Let $W\subseteq \{0,1\}^M$ be the set of words $\omega$ for which for all subwords $(\omega_a,\ldots, \omega_{b-1})$

\[
\left| F_a^{b-1}(0) - P\right| < \frac {K_0} {b-a}.
\]
Then
\[
\frac 1 M \log |W| \geq H(P) - O\left(\frac {\log K_0} {K_0} \right) - O\left(\frac {K_0} {M} \right).
\]

\end{lem}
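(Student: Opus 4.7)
The plan is to first recast the defining condition on $W$ as a window constraint on a partial-sum walk. Setting $X_i = \mathbf{1}[\omega_i = 0] - P$ and $S_k = \sum_{i<k} X_i$, the condition $|F_a^{b-1}(0) - P| < K_0/(b-a)$ becomes exactly $|S_b - S_a| < K_0$, so $W$ is precisely the set of words for which the walk $(S_k)_{0 \leq k \leq M}$ has total oscillation strictly less than $K_0$.

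To produce many such words, I would employ a block construction. Fix $L = \lfloor K_0/3 \rfloor$ and split $\{0, \ldots, M-1\}$ into $J = \lfloor M/L \rfloor$ full blocks of length $L$ together with a shorter final block. Write $\theta = \{LP\} \in [0,1)$, and pick a deterministic $\{0,1\}$-valued sequence $(b_j)_{j \geq 1}$ satisfying $\bigl|\sum_{j \leq J'} b_j - J' \theta\bigr| < 1$ for every $J'$; the sequence $b_j = \lfloor j \theta \rfloor - \lfloor (j-1)\theta \rfloor$ works, by the three distance theorem. In the $j$-th block I would require exactly $\lfloor LP \rfloor + b_j$ zeros, arranged in any order, and allow any word at all in the final short block.

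Every such construction lies in $W$: by choice of $(b_j)$, the number of zeros in the first $jL$ symbols is within $1$ of $jLP$, hence $|S_{jL}| \leq 1$ at each boundary, and within any block $|S_k - S_{jL}| \leq L$. The oscillation of $(S_k)$ is therefore at most $2L + 2 < K_0$ for $K_0$ large, and the frequency condition holds for every sub-interval. The count is a routine Stirling estimate: each full block contributes at least $\binom{L}{\lfloor LP \rfloor} \geq \exp\bigl(LH(P) - \tfrac{1}{2}\log L - C\bigr)$ arrangements, giving
\[
\log |W| \geq J\bigl(LH(P) - \tfrac{1}{2}\log L - C\bigr) \geq MH(P) - O\!\left(\frac{M \log K_0}{K_0}\right) - O(K_0),
\]
the final $O(K_0)$ absorbing the dropped short block. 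Dividing by $M$ yields the claimed bound.

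The main obstacle will be the drift cancellation encoded in the choice of $(b_j)$. If every block were forced to contain the same integer number of zeros, $S_{jL}$ would drift linearly in $j$ and overshoot the window of width $K_0$ within $O(K_0)$ blocks, preventing any nontrivial lower bound on $|W|$. Balancing the blocks via a three-distance-theorem type sequence is the only essential combinatorial input; the remaining steps reduce to Stirling's formula and bookkeeping.
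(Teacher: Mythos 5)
Your construction is essentially the same as the paper's: you split the word into blocks of length proportional to $K_0$, prescribe the number of zeros in each block with a balanced (Sturmian/Rauzy-type) rounding sequence so that the partial sums never drift by more than $O(1)$ at block boundaries, and then count the arrangements by Stirling's formula. The partial-sum reformulation and the explicit Beatty sequence $b_j=\lfloor j\theta\rfloor-\lfloor (j-1)\theta\rfloor$ are just a cleaner way of stating the paper's drift-cancellation step, so the proposal is correct and matches the paper's argument.
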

\begin{proof}
Consider the set of sequences for which for all $0\leq i<2M/K_0$
\[
\left| F_{iK_0/2}^{(i+1)K_0/2 - 1}(0) - PK_0/2 \right| \leq 1.
\]

Moreover, if $PK_0/2$ is not an integer, we choose the blocks for which
\[
F_{iK_0/2}^{(i+1)K_0/2 - 1}(0) - PK_0/2 <0
\]
and blocks for which
\[
F_{iK_0/2}^{(i+1)K_0/2 - 1}(0) - PK_0/2 >0
\]
in such a way that for any $i,j$

\[
\left| F_{iK_0/2}^{jK_0/2 - 1}(0) - PK_0/2 \right| \leq 1
\]
(the blocks for which the frequency was greater than the target and blocks for which it was smaller than the target form a Rauzy sequence).

Let $\tilde{W} \subseteq \{0,1\}^M$ be the set we defined.
Clearly, $\tilde{W} \subseteq W$. We can estimate by Stirling's formula

\[
\log |\tilde{W}| \geq \left\lfloor \frac{2M}{K_0} \right\rfloor \cdot \log {\binom{K_0/2}{PK_0/2}} = 2M H(P) + O\left(\frac{M\log K_0}{K_0}\right) + O(K_0).
\]
\end{proof}


Let  $Y(\delta)=\sup_{0 \leq \gamma \leq 2} \tilde{Y}(\gamma,\delta)$, where

\begin{eqnarray} \label{eqn:ytilde}
\tilde{Y}(\gamma, \delta) &=& H(P) + \sigma (P\log n_0 + (1-P) \log n_1) \nonumber\\
&+& \left(1-\sigma^{\gamma -\lfloor \gamma\rfloor}\right)\Delta((-1)^{\lfloor \gamma \rfloor}) \nonumber \\
&+& \frac 1 {1+\sigma} \left(\sigma^{-2\lfloor \gamma/2 \rfloor} \Delta(-1) + \sigma^{-2\lfloor (\gamma-1)/2 \rfloor +1} \Delta(+1)\right)\nonumber\\
&+& (-1)^{\lfloor \gamma \rfloor} \left( \frac {2\sigma^{\gamma-\lfloor \gamma \rfloor}} {1+\sigma} -1\right) \delta \log \frac {n_0} {n_1}
\end{eqnarray}
and
\[
\Delta(s) := H(P+s\delta) - H(P).
\]

Note that the first line of \eqref{eqn:ytilde} has an important geometric meaning:
\[
H(P) + \sigma (P\log n_0 + (1-P) \log n_1) = \log m \cdot \dim_H X_\alpha.
\]

\begin{lem} \label{lem:boxdim}
\[
\lim_{K_0 \to \infty} \overline{\dim}_B W_{\alpha, \delta, K_0} = \frac {Y(\delta)} {\log m}.
\]
\end{lem}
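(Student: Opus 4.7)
The plan is to establish matching upper and lower bounds on $\overline{\dim}_B W_{\alpha,\delta,K_0}$, with errors vanishing as $K_0 \to \infty$. The structural observation is that the defining condition on $W_{\alpha,\delta,K_0}$ partitions positions in the symbolic expansion into geometrically growing blocks $J_L = [\sigma^{-L}, \sigma^{-L-1})$, on each of which the frequency of digit $0$ is pinned to $P + (-1)^L \delta$ up to an error $O(K_0/|J_L|)$. Since $|J_L|$ scales by $\sigma^{-1}$ between consecutive blocks, the contribution to the covering count at level $N$ is dominated by the one or two blocks straddling the two cutoffs $N$ (for the $i$-coordinate) and $\sigma N$ (for the $j$-coordinate). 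I would parametrize $N = \sigma^{-K-\gamma}$ with $\gamma \in [0,2)$, so that as $\gamma$ runs through $[0,2)$ the two cutoffs together sweep through a full parity cycle of blocks.

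For the upper bound, I would factorize the number of approximate squares of level $N$ meeting $W_{\alpha,\delta,K_0}$ as a product over the blocks. Lemma \ref{lem:calc} counts the $i$-sequences with the prescribed block-frequency and contributes $|J_L| H(P+(-1)^L\delta)$ to the log-count on each block $J_L$, with error $O(\log K_0/K_0)$ per block, which sums to the same order by geometric dominance. On blocks $J_L \subseteq [1, \sigma N]$ the $j$-choices multiply in an extra $|J_L|\bigl((P+(-1)^L\delta)\log n_0 + (1-P-(-1)^L\delta)\log n_1\bigr)$. The baseline $H(P) + \sigma(P\log n_0 + (1-P)\log n_1) = \log m \cdot \dim_H X_\alpha$ factors out because the sign $(-1)^L$ makes the first-order-in-$\delta$ corrections cancel within each full scale-doubling cycle, which is exactly the mechanism that $A=-1$ enables through $F^2 \equiv \text{id}$. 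Summing the geometric series and accounting for the two partial blocks at the cutoffs produces exactly $\tilde{Y}(\gamma,\delta)$: the second line of \eqref{eqn:ytilde} is the partial block at $\sigma N$, the third line collects the two largest complete blocks via the geometric sum $\sum_{k \ge 0} \sigma^{2k} = 1/((1-\sigma)(1+\sigma))$, and the fourth line is the residual $j$-contribution from the partial block. Taking $\sup_\gamma$ then letting $K_0 \to \infty$ yields the claimed upper bound on $\overline{\dim}_B$.

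For the matching lower bound, I would fix $\gamma^*$ achieving $Y(\delta)$ and, along a subsequence $N_j = \sigma^{-K_j-\gamma^*}$, construct a Bernoulli-like probability measure $\nu$ on a subset of $W_{\alpha,\delta,K_0}$ by making the digits on each block $J_L$ independent with row-$0$ probability $P+(-1)^L\delta$, uniformly across the $n_0$ or $n_1$ columns. A direct log-computation then gives $\nu(C_{N_j}(x)) \leq m^{-N_j \tilde{Y}(\gamma^*,\delta)/\log m + o(N_j)}$ for $\nu$-a.e.\ $x$, and the standard bound that any cover of a $\nu$-mass-$1$ set by level-$N_j$ approximate squares uses at least $(\max \nu\text{-mass})^{-1}$ of them delivers the lower bound. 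The main obstacle is the bookkeeping around the two partial blocks at the cutoffs, where the fractional part of $\gamma$ and the parities of the two adjacent blocks interact to produce the second and fourth lines of \eqref{eqn:ytilde}; once the parametrization $N = \sigma^{-K-\gamma}$ is in place, the core of the argument reduces to Stirling combined with a geometric series.
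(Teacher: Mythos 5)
Your upper bound is essentially the paper's argument (count admissible symbol sequences block by block, parametrize the scale as $\sigma^{-K-\gamma}$, sum the geometric series, take $\sup_\gamma$), modulo the cosmetic point that Lemma \ref{lem:calc} is a \emph{lower} bound on word counts, so the upper count must come from the standard Stirling/entropy estimate for words with prescribed block frequencies — an easy fix. The genuine gap is in your lower bound. The Bernoulli-like measure $\nu$ with digits independent within each block is \emph{not} supported on a subset of $W_{\alpha,\delta,K_0}$: membership in $W_{\alpha,\delta,K_0}$ requires that for every subword $(a,b)$ inside a block the number of row-$0$ symbols differ from $(P\pm\delta)(b-a)$ by less than the fixed constant $K_0$, i.e.\ the centered counting path must have oscillation bounded by $K_0$ across the whole block. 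For i.i.d.\ digits the typical oscillation over a block of length $L$ is of order $\sqrt{L}$, and the probability of staying in a window of fixed width $K_0$ is exponentially small in $L/K_0^2$; since block lengths grow geometrically, $\nu(W_{\alpha,\delta,K_0})=0$ for every fixed $K_0$ (indeed $\nu$ gives zero mass even to the union over $K_0$). Consequently your mass-distribution step lower-bounds the covering number of the set of $\nu$-typical points — a set not contained in $W_{\alpha,\delta,K_0}$ — and does not establish $\overline{\dim}_B W_{\alpha,\delta,K_0}\geq Y(\delta)/\log m - o_{K_0}(1)$, which is what the limit statement needs.

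The paper avoids this precisely with Lemma \ref{lem:calc}: a deterministic construction (interleaving length-$K_0/2$ blocks with slightly high and slightly low counts in a Rauzy-sequence pattern) produces, inside each block of length $M$, at least $\exp\bigl(MH(P\pm\delta)-O(M\log K_0/K_0)-O(K_0)\bigr)$ words all of whose subword frequencies satisfy the $K_0/(b-a)$ constraint. Each such word yields a distinct nonempty approximate square meeting $W_{\alpha,\delta,K_0}$, and this combinatorial count gives the matching lower bound on $Z_r$ directly, with no measure needed. (Your measure $\nu_\delta$ is exactly the tool the paper uses later, in Proposition \ref{prop:dimp}, to bound $\dim_P X_\alpha$ from below via Frostman — there the target set is $X_\alpha$, whose defining condition tolerates $o(N)$ deviations and hence is satisfied $\nu_\delta$-a.s.; for the rigid set $W_{\alpha,\delta,K_0}$ it is not.) To repair your proof, either quote Lemma \ref{lem:calc} for the lower count, or put a measure on the tree of words produced by that construction — but then the measure adds nothing beyond the count itself.
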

\begin{proof}
We will estimate the number $Z_r$ of
approximate squares of level $\sigma^{-r}$ necessary to cover
$W_{\alpha, \delta, K_0}$. Here $r$ is not necessarily an integer.

Let
\[
Q(r) :=\sum_{j=0}^{n-1} F_0^{\sigma^{-r+1}-1}(0,j)
\]

We can calculate
\[
Q(r)=  (\sigma^{1-r} - \sigma^{1-\lfloor r \rfloor}) (P+(-1)^{\lfloor r \rfloor-1}\delta) + \sum_{\ell =1}^{\lfloor r \rfloor -1} (\sigma^{-\ell} - \sigma^{1-\ell})(P+(-1)^{1-\ell}\delta) + O(K_0 r).
\]


The geometric series is easy to sum, we get

\begin{equation} \label{eqn:qr}
\sigma^{r-1} Q(r) = P + (-1)^{\lfloor r \rfloor} \left(\frac {2\sigma^{r-\lfloor r \rfloor}} {1+\sigma} - 1\right) \delta + O(\sigma^r K_0).
\end{equation}

We can now write a formula for $Z_r$, using Lemma \ref{lem:calc}:
\begin{align} 
\log Z_r &=& (\sigma^{-r}-\sigma^{-\lfloor r \rfloor}) H(P+(-1)^{\lfloor r \rfloor} \delta) + \sum_{\ell =1}^{\lfloor r \rfloor} (\sigma^{-\ell} - \sigma^{1-\ell}) H(P+(-1)^{1-\ell}\delta)\nonumber\\
 &+& Q(r) \log n_0 + (\sigma^{1-r} - Q(r))\log n_1 + O\left(\frac {r\log K_0} {K_0} \right) + O\left(K_0\right).
\label{eqn:zr}
\end{align}

We can sum the geometric series and substitute \eqref{eqn:qr}, obtaining the simple formula

\begin{eqnarray*}
\log Z_r &=& \sigma^{-r} H(P) + \sigma^{1-r} (P\log n_0 + (1-P) \log n_1)\\
 &+& \left(\sigma^{-r} - \sigma^{-\lfloor r \rfloor}\right) \Delta((-1)^{\lfloor r \rfloor})\\
 &+& \frac 1 {1+\sigma} \left(\sigma^{-2\lfloor r/2 \rfloor} \Delta(-1) + \sigma^{-2\lfloor (r-1)/2 \rfloor +1} \Delta(+1)\right)\\
  &+& (-1)^{\lfloor r \rfloor} \left( \frac {2\sigma^{r-\lfloor r \rfloor}} {1+\sigma} -1\right) \delta \log \frac {n_0} {n_1}+ O\left(\frac {r\log K_0} {K_0} \right) + O(K_0).
\end{eqnarray*}

We want to calculate the upper limit

\[
\overline{\dim_B} W_{\alpha,
  \delta, K_0} = \limsup \sigma^r \frac {\log Z_r} {\log m}.
\]
Note that over any subsequence $r=\gamma, \gamma+2, \gamma+4,\ldots$ the
sequence $\lim_{k\to\infty}   \sigma^{\gamma+2k}\log Z_{\gamma+2k}$ converges:

\begin{eqnarray} \label{eqn:gamma}
\lim_{k\to\infty}   \sigma^{\gamma+2k}\log Z_{\gamma+2k} &=& H(P) + \sigma (P\log n_0 + (1-P) \log n_1)\nonumber\\
&+& \left(1-\sigma^{\gamma-\lfloor \gamma\rfloor}\right) \Delta((-1)^{\lfloor \gamma \rfloor})\nonumber\\
&+& \frac 1 {1+\sigma} \left(\sigma^{-2\lfloor \gamma/2 \rfloor} \Delta(-1) + \sigma^{-2\lfloor (\gamma-1)/2 \rfloor +1} \Delta(+1)\right)\nonumber\\
&+& (-1)^{\lfloor \gamma \rfloor} \left( \frac {2\sigma^{\gamma-\lfloor \gamma \rfloor}} {1+\sigma} -1\right) \delta \log \frac {n_0} {n_1}
\end{eqnarray}

As the right hand side of \eqref{eqn:gamma} is exactly the function we denoted as $\tilde{Y}(\gamma, \delta)$, the assertion follows.


\end{proof}

We do not need the exact formulation of the Lemma \ref{lem:boxdim}. We only need the following corollary:

\begin{cor} \label{cor:form}
For $\alpha \notin \{\alpha_m, \alpha_M\}$ and sufficiently small $\delta > 0$,
\[
\frac 1 {\log m} Y(\delta) > \dim_H X_\alpha.
\]
\end{cor}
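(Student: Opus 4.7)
The aim is to prove $Y(\delta) > \log m \cdot \dim_H X_\alpha$ for sufficiently small $\delta > 0$. The starting point is that both $\Delta(\pm 1)$ and the $\delta\log(n_0/n_1)$ term in \eqref{eqn:ytilde} vanish at $\delta = 0$, so $\tilde{Y}(\gamma, 0) = H(P) + \sigma(P\log n_0 + (1-P)\log n_1)$ is constant in $\gamma$ and equals $\log m \cdot \dim_H X_\alpha$ by the identification immediately following \eqref{eqn:ytilde}. Hence $Y(0) = \log m \cdot \dim_H X_\alpha$, and the corollary reduces to showing $Y(\delta) > Y(0)$ for all sufficiently small $\delta > 0$.

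The plan is to Taylor expand $\tilde{Y}(\gamma, \delta)$ in $\delta$ about $\delta = 0$. Writing $\Delta(s) = s\delta H'(P) + O(\delta^2)$ with $H'(P) = \log\frac{1-P}{P}$, the first-order coefficient $g(\gamma) := \partial_\delta \tilde{Y}(\gamma, 0)$ is piecewise smooth, and on $\gamma \in [0,1)$ it takes the form $a\sigma^\gamma + b$ with
\[
a = -H'(P) + \frac{2}{1+\sigma}\log\frac{n_0}{n_1}, \qquad b = \frac{\sigma(1+\sigma^2)}{1+\sigma} H'(P) - \log\frac{n_0}{n_1}.
\]
The crucial observation is that $g$ cannot be identically zero on $[0, 2]$ when $\alpha \in (\alpha_m, \alpha_M)$, equivalently when $P \in (0, 1)$. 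Indeed, if $g \equiv 0$ on $[0, 1)$ then $a = b = 0$; assuming first $H'(P) \neq 0$ and eliminating $\log(n_0/n_1)$ between the two equations forces $2\sigma(1+\sigma^2) = (1+\sigma)^2$, which fails on $(0, 1)$ since $2\sigma(1+\sigma^2) - (1+\sigma)^2 = 2\sigma^3 - \sigma^2 - 1 = (\sigma - 1)(2\sigma^2 + \sigma + 1) < 0$ there. If instead $H'(P) = 0$, then $a = 0$ collapses to $\log(n_0/n_1) = 0$, contradicting the standing assumption $n_0 \neq n_1$.

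Once $g$ is known not to be identically zero, pick $\gamma_0 \in [0, 2]$ with $g(\gamma_0) \neq 0$; then $\tilde{Y}(\gamma_0, \delta) - Y(0) = \delta g(\gamma_0) + O(\delta^2)$ is strictly positive for $\delta$ of the sign matching $g(\gamma_0)$. A relabeling of the parity of $K$ in the definition of $W_{\alpha, \delta, K_0}$ converts the constraints for $-\delta$ into those for $\delta$ up to a finite shift of the scale index, which leaves the upper box counting dimension unchanged; hence $Y(\delta) = Y(-\delta)$, and we may take $\delta$ of either sign. This gives $Y(\delta) > Y(0)$ for all sufficiently small $\delta > 0$, completing the proof. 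The main obstacles are the algebraic elimination yielding $2\sigma(1+\sigma^2) = (1+\sigma)^2$ (which is elementary but requires careful tracking of the floor-function indexing in \eqref{eqn:ytilde}) and the justification of the symmetry $Y(\delta) = Y(-\delta)$; both steps are routine but need bookkeeping across the integer boundaries of $\gamma$.
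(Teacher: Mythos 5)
Your outline is the same as the paper's: identify $Y(0)=\log m\cdot\dim_H X_\alpha$, expand $\tilde{Y}(\gamma,\delta)$ to first order in $\delta$, show the first-order coefficient cannot vanish identically in $\gamma$, and then handle the sign of $\delta$ (you via a symmetry $Y(\delta)=Y(-\delta)$, the paper via the cancellation of the first-order terms at $\gamma$ and $\gamma+1$, which is the same mechanism). The genuine problem is the coefficients you differentiate. Your $g(\gamma)=a\sigma^\gamma+b$ is the derivative of \eqref{eqn:ytilde} taken exactly as printed; but the derivative the paper's own proof uses contains an extra term $\sigma^{\lfloor\gamma\rfloor-\gamma}\tfrac{1-\sigma}{1+\sigma}H'(P)$, which cannot come from \eqref{eqn:ytilde} as printed (its third line depends on $\gamma$ only through floor functions, hence is piecewise constant), so at least one of the two expressions rests on a misprint --- and in this corollary the exact first-order coefficients are the entire content. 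If one recomputes the limit directly from the covering estimates \eqref{eqn:zr} and \eqref{eqn:qr} (rather than from \eqref{eqn:ytilde}), the first-order coefficient comes out as
\[
(-1)^{\lfloor\gamma\rfloor}\Bigl(1-\frac{2\sigma^{\gamma-\lfloor\gamma\rfloor}}{1+\sigma}\Bigr)\Bigl(H'(P)-\sigma\log\frac{n_0}{n_1}\Bigr),
\]
i.e.\ the functions $1$ and $\sigma^\gamma$ appear only in one fixed combination, so the linear-independence step does not produce two independent equations. Identical vanishing is then equivalent to the single equation $H'(P)=\sigma\log(n_0/n_1)$, which is \emph{not} excluded by $n_0\neq n_1$: it happens at exactly one interior $\alpha$, namely the one with $P=n_1^{\sigma}/(n_0^{\sigma}+n_1^{\sigma})=q_0$ (the $\mu$-typical frequency, using $A=-1$). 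At that $\alpha$ your elimination ending in $2\sigma(1+\sigma^2)=(1+\sigma)^2$ proves nothing; worse, by concavity of $H$ one gets $\tilde{Y}(\gamma,\delta)\le Y(0)$ there for every $\delta$, so no choice of small $\delta$ can rescue the strict inequality along this route. Any proof of the corollary must therefore either verify the first-order coefficients from the covering counts or treat this exceptional $\alpha$ separately; taking \eqref{eqn:ytilde} at face value bypasses exactly the point at risk.

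A second, smaller issue is internal consistency of your sign argument. The symmetry $Y(\delta)=Y(-\delta)$ is false for \eqref{eqn:ytilde} read literally (the printed third line breaks it); your justification by relabelling the parity of the blocks in $W_{\alpha,\delta,K_0}$ is the right idea, but it identifies $Y$ with the box-dimension limit of Lemma \ref{lem:boxdim}, i.e.\ with the corrected formula, while your coefficients $a,b$ were computed from the uncorrected one --- you differentiate one function and invoke the symmetry of another. The consistent fix is the paper's device: the first-order coefficients at $\gamma$ and $\gamma+1$ are negatives of each other (and, with the corrected coefficient above, the factor $1-\tfrac{2\sigma^{\gamma-\lfloor\gamma\rfloor}}{1+\sigma}$ already changes sign within a single unit interval), so whenever the coefficient is not identically zero a strictly positive value is available with $\delta>0$ and no sign flip is needed. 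But the substantive gap remains the one in the previous paragraph.
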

\begin{proof}

Define
\[
\tilde{Z}(\gamma, \delta) = \tilde{Y}(\gamma, \delta) - \log m \cdot \dim_H X_\alpha.
\]

We can write the approximate form for $\partial/\partial \delta \tilde{Z}(\gamma, \delta)$:

\begin{equation}
\frac \partial {\partial \delta} \tilde{Z}(\gamma, \delta) = O(\delta)+
\end{equation}
\[
(-1)^{\lfloor \gamma \rfloor} \left(H'(P) - \log \frac {n_0} {n_1} + \sigma^{\gamma - \lfloor \gamma \rfloor} (-H'(P) + \frac 2 {1+\sigma} \log \frac {n_0} {n_1}) + \sigma^{\lfloor \gamma \rfloor - \gamma} \frac {1-\sigma} {1+\sigma} H'(P) \right).
\]

As $\tilde{Z}(\gamma,0) \equiv 0$ and

\[
\frac \partial {\partial \delta} \tilde{Z}(\gamma, \delta) + \frac \partial {\partial \delta} \tilde{Z}(\gamma +1, \delta) = O(\delta),
\]
the only possibility for $\sup_\gamma \tilde{Z}(\gamma, \delta)$ to stay nonpositive for small $\delta$ is that the function
\[
H'(P) - \log \frac {n_0} {n_1} + \sigma^\gamma \left(-H'(P) + \frac 2 {1+\sigma} \log \frac {n_0} {n_1}\right) + \sigma^{- \gamma} \frac {1-\sigma} {1+\sigma} H'(P)
\]
is equal to 0 for all $\gamma \in [0,1]$. However, this function is a linear combination of functions $1$, $\sigma^\gamma$, and $\sigma^{-\gamma}$, which are linearly independent. Hence, all the coefficients must be equal to zero. In particular, $H'(P)=0$ and $H'(P) - \log (n_0/n_1) = 0$. However, this implies $n_0 = n_1$, which is a contradiction.
\end{proof}
The last part in proving Theorem \ref{A=-1} is to combine the following proposition with Lemma \ref{lem:eqsymb}.
\begin{prop} \label{prop:dimp}
\[
\dim_P X_\alpha^{\rm symb} \leq \max_{\delta \leq \min(P, 1-P)} \frac {Y(\delta)} {\log m}
\]
\[
\dim_P X_\alpha \geq \max_{\delta \leq \min(P, 1-P)} \frac {Y(\delta)} {\log m}
\]
\end{prop}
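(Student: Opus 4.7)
The plan is to establish the two inequalities separately. For the upper bound, I would cover $X_\alpha^{\rm symb}$ by a countable collection of sets of the form $W_{\alpha,\delta,K_0}$ and then apply $\dim_P\leq\overline{\dim}_B$ together with the countable stability of packing dimension and Lemma \ref{lem:boxdim}. The key input is the recurrence \eqref{eps0}: when $A=-1$ it becomes $P_{k+1}+P_k=2P+O(\epsilon)+O(\sigma^k/M)$, so the signed deviations $\delta_k(x):=(-1)^k(P_k(x)-P)$ of any $x\in X_\alpha^{\rm symb}$ vary arbitrarily slowly in $k$ (after sending $\epsilon\to 0$) and take values in $[-\min(P,1-P),\min(P,1-P)]$. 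By discretising $\delta$ over the rationals in $[0,\min(P,1-P)]$, $K_0$ over $\mathbb{N}$ and the starting scale over $\mathbb{N}$ (and refining the discretisation in a standard way to absorb the slow drift of the amplitude), every $x\in X_\alpha^{\rm symb}$ lies in some member of the resulting countable family. Lemma \ref{lem:boxdim} gives $\overline{\dim}_B W_{\alpha,\delta,K_0}\to Y(\delta)/\log m$ as $K_0\to\infty$, and taking the supremum over rational $\delta$ in the covering yields the upper bound.

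For the lower bound, I would pick $\delta^*\in[0,\min(P,1-P)]$ attaining $Y(\delta^*)=\max_\delta Y(\delta)$ and $\gamma^*\in[0,2]$ attaining $\tilde{Y}(\gamma^*,\delta^*)=Y(\delta^*)$, then construct an explicit set $E\subseteq X_\alpha$ carrying a probability measure $\nu$ satisfying
\[
\limsup_{r\to 0}\frac{\log\nu(B(x,r))}{\log r}\geq\frac{Y(\delta^*)}{\log m}
\]
for every $x\in E$. The construction partitions the symbolic expansion according to the scales $M\sigma^{-(\gamma^*+2k)}$ and prescribes the frequency of the $0$-row inside each block to be $P+(-1)^{\lfloor\gamma^*+2k\rfloor}\delta^*$, while spreading the rectangles uniformly among the admissible words and forbidding long runs of identical $i$ or $j$ via a Rauzy-type arrangement like the one used in Lemma \ref{lem:calc}. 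The measure $\nu$ assigns equal mass to the approximate squares at the prescribed scales. At every $x\in E$ the no-long-runs condition together with \eqref{above}, \eqref{below} and \eqref{smoothmucn} implies $d_\mu(x)=\delta_\mu(x)=\alpha$, so $E\subseteq X_\alpha$. The count underlying \eqref{eqn:zr} gives $\nu(C_{M\sigma^{-(\gamma^*+2k)}}(x))=\exp(-\sigma^{-(\gamma^*+2k)}Y(\delta^*)+o(\sigma^{-k}))$, and the same no-long-runs condition lets me replace approximate squares by geometric balls. Applying the Frostman-type lemma for packing dimension recalled in Section 2 then yields $\dim_P E\geq Y(\delta^*)/\log m$.

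I expect the main obstacle to lie in carrying out the lower-bound construction so that the local packing estimate holds at every point of the support, not merely $\nu$-almost everywhere. The three points that need care are: (i) producing a Rauzy-type arrangement whose word count realises the full $\tilde{Y}(\gamma^*,\delta^*)$ of Lemma \ref{lem:boxdim} while simultaneously forbidding long monochromatic runs in both coordinates; (ii) checking that intermediate scales between consecutive $M\sigma^{-(\gamma^*+2k)}$ do not concentrate $\nu$-mass enough to invalidate the limsup --- which is harmless because packing dimension only needs the growth along a subsequence of scales, and $\gamma^*$ is chosen to maximise along exactly such a subsequence; and (iii) passing cleanly from the symbolic estimates for $\mu(C_N(x))$ to estimates for $\nu(B(x,r))$ at arbitrary small $r$, which the no-long-runs condition handles uniformly. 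Once these are settled, combining the upper bound, the lower bound and Lemma \ref{lem:eqsymb} gives $\dim_P X_\alpha=\dim_P X_\alpha^{\rm symb}$, and comparing with Corollary \ref{cor:form} produces the strict inequality $\dim_H X_\alpha^{\rm symb}<\dim_P X_\alpha^{\rm symb}$ needed to finish Theorem \ref{A=-1}.
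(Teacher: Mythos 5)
Your lower bound is essentially the paper's argument in a harder, deterministic form: the paper simply takes the inhomogeneous Bernoulli-type measure $\nu_\delta$ (row $0$ chosen with probability $P+\delta$ or $P-\delta$, alternating with the parity of the block $[\sigma^{-K},\sigma^{-K-1})$, and uniform choice within each row), checks $\nu_\delta(X_\alpha)=1$ by large deviations, shows $\overline{d}_{\nu_\delta}(x)\ge Y(\delta)/\log m$ for $\nu_\delta$-a.e.\ $x$ along the scales $\sigma^{-(2K+\gamma_0(\delta))}$, and applies the Frostman-type lemma of Section 2. That lemma only requires the limsup estimate on a set of full $\nu_\delta$-measure contained in $X_\alpha$, so your concerns (i)--(iii) about forcing the estimate at \emph{every} point of a Moran-type support (and the Rauzy arrangement with no long runs built into the support) are unnecessary; an almost-everywhere statement suffices, and the passage from approximate squares to balls is also only needed a.e., where long runs of identical $i$'s almost surely do not occur.

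The genuine gap is in your upper bound: the sets $W_{\alpha,\delta,K_0}$ cannot cover $X_\alpha^{\rm symb}$, not even after a countable union over rational $\delta$, over $K_0\in\N$ and over starting scales. Membership in $W_{\alpha,\delta,K_0}$ is a bounded-discrepancy condition -- over \emph{every} subword of every block the number of row-$0$ symbols must lie within the absolute constant $K_0$ of the target $(P\pm\delta)(b-a)$ -- whereas a point of $X_\alpha^{\rm symb}$ only satisfies the weak relation $P_{k+1}+P_k=2P+O(\epsilon)+O(\sigma^k/M)$ from \eqref{eps0}: its amplitude may drift, and its counts typically fluctuate on the scale $\sqrt{b-a}$. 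In particular the $\nu_\delta$-typical points used in the lower bound belong to $X_\alpha^{\rm symb}$ but to no $W_{\alpha,\delta,K_0}$ at all, so your cover misses a set which a priori could carry the packing dimension; refining the discretisation does not help because the tolerance in the definition of $W$ is $K_0/(b-a)$ rather than a fixed fraction of $b-a$. The paper handles this with a different decomposition: it writes $X_\alpha^{\rm symb}=\bigcup_N X_{\alpha,N}$, where $N(x)$ is the first index after which $|G_{\sigma N-1}(x)+G_{N-1}(x)-2P|<\epsilon_2$ holds, and bounds $\overline{\dim}_B X_{\alpha,N}$ by a direct counting argument in which the amplitude is allowed to change from one generation of scales to the next, each generation contributing at most $\sup_{\delta}Y(\delta)$ admissible words up to an $O(\epsilon_1)$ error; countable stability of packing dimension then gives the claim. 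So the skeleton ``countable decomposition, box-dimension bound on each piece'' is right, but the pieces must be these larger sets, and for them Lemma \ref{lem:boxdim} cannot simply be cited -- a fresh count in the spirit of its proof is required.
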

\begin{proof}
Let us start with the lower bound.

Consider any $\delta < \min(P, 1-P)$.
Let $\gamma_0(\delta)$ be such that
\[
Y(\delta) = \tilde{Y}(\gamma_0, \delta).
\]

Consider the measure $\nu_\delta$ on $D^\N$ defined as follows. For any even $K$, for all $\ell \in [\sigma^{-K}, \sigma^{-K-1})$ we choose $i_\ell=0$ with probability $P+\delta$ and $i_\ell=1$ with probability $1-P-\delta_0$, independently. For any odd $K$, for all $\ell \in [\sigma^{-K}, \sigma^{-K-1})$ we choose $i_\ell=0$ with probability $P-\delta_0$ and $i_\ell=1$ with probability $1-P+\delta_0$. Whichever the choice of $i_\ell$, all the possible $j_\ell; (i_\ell, j_\ell)\in D$ we choose with the same probability $1/n_{i_\ell}$.

We will use also the projection of $\nu_\delta$ onto $\Lambda$, which we will also denote by $\nu_\delta$.

Let us begin from the observation, that for every $\epsilon>0$ and for $\nu_\delta$-almost every $x$ there are only finitely many $N$ such that all $i_\ell; \ell = N, N+1,\ldots, N(1+\epsilon)$ are equal. Hence,

\begin{equation} \label{eqn:uplocdim}
\overline{d}_{\nu_\delta}(x) = \limsup_{\ell\to\infty} \frac {\log \nu_\delta(C_\ell(x))} {-\ell \log m} = \overline{\delta}_{\nu_\delta}(x)
\end{equation}
$\nu_\delta$-almost everywhere.

To obtain the lower bound, we need the following two lemmas.

\begin{lem}
\[
\nu_\delta(X_\alpha) =1.
\]
\end{lem}
\begin{proof}

By \eqref{eqn:uplocdim}, $\nu_\delta$-typical point is in $X_\alpha$ if and only if it is in $X_\alpha^{\rm{symb}}$.
Recall that a sufficient condition for $x\in X_\alpha^{\rm symb}$ is that there exists a function $\epsilon_N\to 0$ such that for all except finitely many $N$ we have

\begin{equation} \label{eqn:ldev}
\left| \frac 1 {\sigma N} \sum_{j=0}^{n-1} F_0^{\sigma N-1}(0,j)(x) + \frac 1 N \sum_{j=0}^{n-1} F_0^{N-1}(0,j)(x) -2P \right| < \epsilon_N.
\end{equation}

Standard large deviation estimates show that for $\nu_\delta$-typical $x$ \eqref{eqn:ldev} is satisfied for all except finitely many $N$ if we choose $\epsilon_N = N^{-1/3}$. We skip the details.
\end{proof}

\begin{lem}
For $\nu_\delta$-almost every $x$
\[
\overline{d}_{\nu_\delta}(x) \geq \frac 1 {\log m} Y(\delta).
\]
\end{lem}
\begin{proof}

To prove the assertion, we just need to find a sequence $\ell\to\infty$ for which the limit in \eqref{eqn:uplocdim} would be not smaller than $Y(\delta)/\log m$. The right sequence is
\[
\ell_K = \sigma^{a_K},
\]
where
\[
a_K = 2K + \gamma_0(\delta).
\]
For $\nu_\delta$-typical $x$, for $K$ large enough we have

\begin{eqnarray*}
\log \nu_\delta (C_{\ell_K}(x)) &=& -(\sigma^{-a_K}-\sigma^{-\lfloor a_K \rfloor}) H(P+(-1)^{\lfloor a_K \rfloor} \delta)\\
&-&\sum_{k =1}^{\lfloor a_K \rfloor} (\sigma^{-k} - \sigma^{1-k}) H(P+(-1)^{1-k}\delta)\\
&-& \left((\sigma^{1-a_K} - \sigma^{1-\lfloor a_K \rfloor}) (P+(-1)^{\lfloor a_K \rfloor-1}\delta)\right)\log n_0 \\
&-& \left(\sum_{k =1}^{\lfloor a_K \rfloor -1} (\sigma^{-k} - \sigma^{1-k})(P+(-1)^{1-k}\delta)\right) \log n_0 \\
&-& \left(\sigma^{1-a_K} - (\sigma^{1-a_K} - \sigma^{1-\lfloor a_K \rfloor}) (P+(-1)^{\lfloor a_K \rfloor-1}\delta)\right)\log n_1\\ 
&+& \left(\sum_{k =1}^{\lfloor a_K \rfloor -1} (\sigma^{-k} - \sigma^{1-k})(P+(-1)^{1-\ell}\delta)\right) \log n_1 + O(\ell_K^{1/2}).
\end{eqnarray*}

Comparing with \eqref{eqn:zr}, we get
\[
\log \nu_\delta (C_{\ell_K}(x)) = -Z_{\ell_K}+\mathit{o}(\ell_K).
\]
and
\[
\lim_{K\to\infty} \frac 1 {\ell_K} \log \nu_\delta (C_{\ell_K}(x)) = - \lim_{K_0\to\infty} \lim_{K\to\infty} \frac 1 {\ell_K} Z_{\ell_K}.
\]

The calculations of this limit were done in the course of proof of Lemma \ref{lem:boxdim}:

\[
\lim_{K_0\to\infty} \lim_{K\to\infty} \frac 1 {\ell_K} Z_{\ell_K} = Y(\delta).
\]
The assertion follows.
\end{proof}

To finish the proof of the lower bound in Proposition \ref{prop:dimp} we need only to observe that $Y$ is a continuous function of $\delta$, hence
\[
\max \{Y(\delta), \delta \leq \min(P, 1-P)\} = \sup \{Y(\delta), \delta < \min(P, 1-P)\},
\]
and apply Frostman's lemma.

To prove the upper bound, let us fix some small $\epsilon_1$ and some much smaller $\epsilon_2<\epsilon_1/2U$. Let us consider a finite family of intervals $\{I_k\}_{k=1}^V$ of size $\epsilon_1$, covering $[0,1]$. Let $J_k=3I_k$, that is, an interval with the same center as $I_k$ but three times longer.

For every point $x\in X_\alpha^{\rm symb}$ there is some minimal $N(x)$ such that for all $N>N(x)$
\begin{equation} \label{eqn:nx}
\left| G_{\sigma N-1}(x) + G_{N-1}(x) -2P \right| < \epsilon_2.
\end{equation}

We divide $X_\alpha^{\rm symb}$ into subsets
\[
X_{\alpha, N} = \{x\in X_\alpha^{\rm symb}; N(x) = N\}.
\]

Our goal is to prove
\begin{lem}
For each $N$,
\[
\overline{\dim}_B X_{\alpha, N} \leq \frac 1 {\log m} \max_{\delta \leq \min(P, 1-P)} Y(\delta).
\]
\end{lem}
\begin{proof}
Let $x\in X_{N, \alpha}$. Let $k$ be such that
\[
G_{N-1}(x) \in I_k.
\]
By \eqref{eqn:nx}, it means that for all $\ell \leq U$ we have
\[
G_{\sigma^{-2\ell}N-1}(x) \in J_k\text{ and }
G_{\sigma^{-2\ell+1}N-1}(x) \in 2P-J_k
\]
We can estimate the number $A(k, N)$ of possible sequences $(i_N(x), \ldots, i_{\lfloor\sigma^{-2U} N\rfloor})$. Like in the lower bound, the estimation will be almost the same as in the proof of Lemma \ref{lem:boxdim}:

\[
\frac 1 {(\sigma^{-2U}-1)N}\log A(k,N) \leq \sup_{\delta; \{P+ \delta, P-\delta\} \cap J_k \neq \emptyset} Y(\delta) + O(\epsilon_1).
\]

There are only $V$ possible $k$'s, hence the number $B(N)$ of possible sequences $(i_N(x), \ldots, i_{\lfloor\sigma^{-2U} N\rfloor})$ for all $x\in X_{\alpha, N}$ satisfies

\[
\frac 1 {(\sigma^{-2U}-1)N} \log B(N) \leq \sup_{\delta \leq \min(P, 1-P)} Y(\delta) + O(\epsilon_1) + O\left( \frac 1 {\sigma^{-2U}N}\right).
\]

Repeating the argument for $B(\sigma^{-2U}N)$ and so on and passing to the limit, we get
\[
\overline{\dim}_B X_{\alpha, N} \leq \frac 1 {\log m} \min(P, 1-P) Y(\delta) + O(\epsilon_1)
\]
and the assertion follows.
\end{proof}

As the packing dimension is not greater than the upper box counting dimension, this gives the upper bound for Proposition \ref{prop:dimp} and so the proof of Proposition \ref{prop:dimp} is complete. 
\end{proof}
Theorem \ref{A=-1} now follows by combining Proposition \ref{prop:dimp}, Lemma \ref{lem:eqsymb} Lemma \ref{endpoints} and Corollary \ref{cor:form}. This completes the proof of Theorem \ref{thm:2}.


\begin{thebibliography}{WW}


\bibitem[AP]{AP}
M. Arbeiter and N. Patzschke, 
\newblock Random self-similar multifractals, 
\newblock {\it Math. Nachr.} 181 (1996), 5–-42.

\bibitem[BOS]{BOS}
I. S. Baek, L. Olsen, N. Snigireva
\newblock Divergence points of self-similar measures and packing dimension,
\newblock {\it Adv. Math.} 214 (2007), no. 1, 267–-287. 

\bibitem[BF]{BF}
J. Barral and D. J. Feng,
\newblock Weighted thermodynamic formalism on subshifts and applications,
\newblock {\it Asian J. Math.} 16 (2012), no. 2, 319–-352.

\bibitem[BM]{BM}
J. Barral and M. Mensi, 
\newblock Gibbs measures on self-affine Sierpi\'nski carpets and
their singularity spectrum, 
\newblock {\it Ergodic Theory Dynam. Systems} 27 (2007), no. 5, 1419-–1443.

\bibitem[B]{B}
T. Bedford, 
\newblock {\it Crinkly curves, Markov partitions and box dimensions in self-similar
sets,} 
\newblock PhD Thesis, The University of Warwick, 1984.

\bibitem[CM]{CM}
R. Cawley and R.D. Mauldin, 
\newblock Multifractal decompositions of Moran fractals,
\newblock {\it Adv. Math.} 92 (1992), no. 2, 196-–236.

\bibitem[GL]{GL}
Y. Gui and W. Li,
\newblock Hausdorff dimension of fiber-coding sub-Sierpi\'{n}ski carpets,
\newblock {\it J. Math. Anal. Appl.} 331 (2007), 62--68.

\bibitem[JR]{JR}
T. Jordan and M. Rams,
\newblock Multifractal analysis for Bedford-McMullen carpets,
\newblock {\it Math. Proc. of Camb. Phil. Soc.} 150 (2011), 147--156.

\bibitem[K]{K}
J. King,  
\newblock The singularity spectrum for general Sierpi\'nski carpets,
\newblock {\it Adv. Math.} 116 (1995), 1--8.

\bibitem[Ma]{Mat}
P. Mattila
\newblock
Geometry of sets and measures in Euclidean spaces. Fractals and rectifiability
\newblock
 Cambridge Studies in Advanced Mathematics, 44. Cambridge University Press, Cambridge, 1995.
\bibitem[McM]{M}
C. McMullen, 
\newblock The Hausdorff dimension of general Sierpi\'nski carpets, 
\newblock {\it Nagoya Math. J.} 96 (1984), 1-–9.

\bibitem[N]{N} 
O. A. Nielsen,
\newblock  The Hausdorff and packing dimensions of some sets related to Sierpi\'nski carpets, 
\newblock {\it Can. J. Math.} 51 (1999), 1073–-88 .

\bibitem[O]{O}
L. Olsen, 
\newblock Self-affine multifractal Sierpi´nski sponges in $\R^d$, 
\newblock {\it Pacific J. Math.} 183 (1998), no. 1, 143–-199.

\bibitem[R]{R}
H. W. J. Reeve,
\newblock  The packing spectrum for Birkhoff averages on a self-affine repeller,
\newblock {\it Ergodic Theory Dynam. Systems} 32 (2012), no. 4, 1444–-1470. 




\end{thebibliography}
\end{document}